\theoremstyle{plain}
\newtheorem{thm}{Theorem}[section]
\newtheorem*{thm*}{Theorem}
\newtheorem*{cor*}{Corollary}
\newtheorem{prop}[thm]{Proposition}
\newtheorem{lem}[thm]{Lemma}
\newtheorem{cor}[thm]{Corollary}
\newtheorem*{claim*}{Claim}
\theoremstyle{definition}
\newtheorem{ex}[thm]{Example}
\newtheorem{case}{Case}
\newtheorem{setup}[thm]{Setup}
\theoremstyle{remark}
\newtheorem{rem}[thm]{Remark}
\numberwithin{equation}{thm}
\newtheorem*{ac}{Acknowledgments}
\def\Ext{\operatorname{Ext}}
\def\Ker{\operatorname{Ker}}
\def\Hom{\operatorname{Hom}}
\def\Assh{\operatorname{Assh}}
\def\Proj{\operatorname{Proj}}
\def\m{\mathfrak m}
\def\p{\mathfrak p}
\def\q{\mathfrak q}
\newcommand{\rme}{\mathrm{e}}
\newcommand{\rmE}{\mathrm{E}}
\newcommand{\rmH}{\mathrm{H}}
\newcommand{\rmK}{\mathrm{K}}
\newcommand{\calF}{\mathcal{F}}
\newcommand{\calR}{\mathcal{R}}
\newcommand{\calX}{\mathcal{X}}
\newcommand{\fkm}{\mathfrak{m}}
\newcommand{\fkq}{\mathfrak{q}}
\newcommand{\fkM}{\mathfrak{M}}
\newcommand{\fkN}{\mathfrak{N}}
\newcommand{\mapright}[1]{%
\smash{\mathop{%
\hbox to 1cm{\rightarrowfill}}\limits^{#1}}}
\newcommand{\mapleft}[1]{%
\smash{\mathop{%
\hbox to 1cm{\leftarrowfill}}\limits_{#1}}}
\def\depth{\operatorname{depth}}
\def\Supp{\operatorname{Supp}}
\def\Ass{\operatorname{Ass}}
\def\Spec{\operatorname{Spec}}
\title[Quasi-Gorenstein extended Rees algebras associated with filtrations]{Quasi-Gorenstein extended Rees algebras \\ associated with filtrations}
\author[Naoki Endo]{Naoki Endo}
\address{School of Political Science and Economics, Meiji University, 1-9-1 Eifuku, Suginami-ku, Tokyo 168-8555, Japan}
\email{endo@meiji.ac.jp}
\urladdr{https://www.isc.meiji.ac.jp/~endo/}
\thanks{2020 {\em Mathematics Subject Classification.} 13A30, 13H10, 13C14.}
\thanks{{\em Key words and phrases.} Quasi-Gorenstein ring, extended Rees algebra, FLC ring}
\thanks{The author was partially supported by JSPS Grant-in-Aid for Scientific Research (C) 23K03058.} 
\begin{document}

\maketitle

\setlength{\baselineskip}{15pt}

\begin{abstract}
This paper investigates the quasi-Gorenstein property of extended Rees algebras associated with the Hilbert filtrations on a Noetherian local ring. We provide necessary and sufficient conditions for the deformation of the quasi-Gorenstein property, characterized by the Cohen-Macaulayness of the Matlis dual of local cohomology modules. As a consequence, we offer a characterization of the quasi-Gorenstein property of extended Rees algebras in terms of conditions on the length of local cohomology.
\end{abstract}


\section{Introduction}\label{sec1}

In this paper, we study the quasi-Gorenstein property for extended Rees algebras associated with the Hilbert filtrations of ideals in a Noetherian local ring. 
As introduced by Platte and Storch in 1977 \cite{PS}, a Noetherian local ring $(R, \m)$
 is defined to be {\it quasi-Gorenstein} if it possesses a canonical module $\rmK_R$ such that $R \cong \rmK_R$ as an $R$-module. Setting $d = \dim R$, this condition is equivalent to $\rmH_\m^d(R) \cong \rmE_R(R/\m)$, where $\rmH_\m^d(-)$ denotes the $d$-th local cohomology functor with respect to $\m$, and $\rmE_R(R/\m)$ represents the injective envelope of $R/\m$. 
Now, let $R = \bigoplus_{n \in \mathbb{Z}} R_n$ be a Noetherian $\Bbb Z$-graded ring with unique graded maximal ideal $\fkM$. We define $R$ to be {\it quasi-Gorenstein} if it has a graded canonical module and its localization $R_{\fkM}$ is quasi-Gorenstein. Equivalently, $R$ admits a graded canonical module $\rmK_R$ such that $\rmK_R \cong R(a)$ for some $a \in \Bbb Z$. 
Here, for a graded $R$-module $M$ and for an integer $\ell$, let $M(\ell)$ denote the graded $R$-module whose underlying $R$-module is the same as that of the $R$-module $M$ and the grading is given by $[M(\ell)]_m = M_{\ell + m}$ for all $m \in \Bbb Z$, where $[-]_m$ denotes the $m$-th homogeneous component.

The quasi-Gorenstein property of the extended Rees algebra has been studied in previous works, such as \cite{HKU, HKU2, Kim}. This paper focuses on investigating the conditions under which quasi-Gorensteinness deforms, specifically examining its inheritance from the associated graded ring to the extended Rees algebra. 
The question of whether the quasi-Gorenstein property deforms -- that is, whether $R$ is quasi-Gorenstein if a Noetherian local ring $(R, \m)$ and its non-zerodivisor $x \in \m$ satisfy $R/xR$ being quasi-Gorenstein -- has been a fundamental question in commutative ring theory. In 2020, Shimomoto, Taniguchi, and Tavanfar provided a counterexample, using Macaulay2, demonstrating that quasi-Gorensteinness does not deform in general (\cite[Theorem 4.2]{STT}). Thus, characterizing the circumstances under which quasi-Gorensteinness deforms has become a central problem in this area. 
In \cite{STT}, the authors presented various sufficient conditions under which the deformation of quasi-Gorensteinness holds. Moreover, they discussed conditions for the deformation of quasi-Gorensteinness in graded rings, but their results were restricted to $\Bbb N$-graded rings, particularly standard graded rings. In contrast, this paper examines the quasi-Gorenstein property of extended Rees algebras, which are $\Bbb Z$-graded rings associated with filtrations of ideals, in relation to the quasi-Gorensteinness of their associated graded rings. In particular, this study characterizes the deformation of quasi-Gorensteinness from the perspective of the Cohen-Macaulayness of the Matlis dual of local cohomology modules.

Let $(R, \m)$ be a Noetherian local ring with $d=\dim R \ge 3$ which is a homomorphic image of a Gorenstein ring. Let $\calF=\{F_n\}_{n \in \Bbb Z}$ be the Hilbert filtration of ideals in $R$, i.e., it is a filtration of ideals such that $F_1$ is $\m$-primary and $F_{n+1} = F_1F_n$ for all $n \gg 0$. Denote by
$$
\calR'(\calF) = \sum_{n \in \Bbb Z}F_nt^n \subseteq R[t, t^{-1}] \ \ \text{and} \ \ G(\calF) = \calR'(\calF)/t^{-1}\calR'(\calF) \cong \bigoplus_{n \ge 0}F_n/F_{n+1}
$$
the {\it extended Rees algebra of $\calF$} and the {\it associated graded ring of $\calF$}, respectively, where $t$ is an indeterminate over $R$. 

With this notation, the main result of this paper is stated as follows.

\begin{thm}\label{main}
Suppose that $G(\calF)$ is a quasi-Gorenstein graded ring, $\depth \calR'(\calF) \ge d$, and $\rmH^{d-1}_{\fkM}(G(\calF))$ is finitely generated as an $\calR'(\calF)$-module, where $\rmH_\fkM^i(-)$ denotes the $i$-th graded local cohomology functor with respect to the unique graded maximal ideal $\fkM$ of $\calR'(\calF)$.
Then $\calR'(\calF)$ is quasi-Gorenstein if and only if the length of $\rmH^{d-1}_{\m}(R)$ as an $R$-module coincides with the length of $\rmH^{d-1}_{\fkM}(G(\calF))$ as a $G(\calF)$-module.
\end{thm}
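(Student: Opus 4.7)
The plan is to analyze the graded local cohomology long exact sequence induced by the canonical short exact sequence
$$0 \to \mathcal{R}'(\mathcal{F})(1) \xrightarrow{t^{-1}} \mathcal{R}'(\mathcal{F}) \to G(\mathcal{F}) \to 0,$$
apply graded Matlis duality, and detect the quasi-Gorenstein property of $\mathcal{R}'(\mathcal{F})$ as a numerical lifting of the quasi-Gorenstein structure on $G(\mathcal{F})$. First, the depth condition $\depth \mathcal{R}'(\mathcal{F}) \geq d$ forces $\mathrm{H}^i_\mathfrak{M}(\mathcal{R}'(\mathcal{F})) = 0$ for $i < d$, hence by the long exact sequence $\mathrm{H}^i_\mathfrak{M}(G(\mathcal{F})) = 0$ for $i \leq d-2$, isolating the relevant fragment at degrees $d-1, d, d+1$. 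Taking graded Matlis duals (valid since $\mathcal{R}'(\mathcal{F})$ is a homomorphic image of a Gorenstein ring) and using $\mathrm{K}_{G(\mathcal{F})} \cong G(\mathcal{F})(a)$ for some $a \in \mathbb{Z}$ yields
$$0 \to \mathrm{K}_{\mathcal{R}'(\mathcal{F})} \xrightarrow{t^{-1}} \mathrm{K}_{\mathcal{R}'(\mathcal{F})}(-1) \to G(\mathcal{F})(a) \to C^d \xrightarrow{t^{-1}} C^d(-1) \to \mathrm{H}^{d-1}_\mathfrak{M}(G(\mathcal{F}))^{\vee} \to 0,$$
where $C^d := \mathrm{H}^d_\mathfrak{M}(\mathcal{R}'(\mathcal{F}))^{\vee}$. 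In particular $t^{-1}$ is regular on $\mathrm{K}_{\mathcal{R}'(\mathcal{F})}$.

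For the forward implication, if $\mathrm{K}_{\mathcal{R}'(\mathcal{F})} \cong \mathcal{R}'(\mathcal{F})(a+1)$ then the first three terms reduce to the tautological exact sequence $0 \to \mathcal{R}'(\mathcal{F})(a+1) \xrightarrow{t^{-1}} \mathcal{R}'(\mathcal{F})(a) \to G(\mathcal{F})(a) \to 0$, forcing the map $G(\mathcal{F})(a) \to C^d$ to vanish; the residual sequence $0 \to C^d \xrightarrow{t^{-1}} C^d(-1) \to \mathrm{H}^{d-1}_\mathfrak{M}(G(\mathcal{F}))^{\vee} \to 0$ then gives the length equality via the telescoping identity $\ell_{G(\mathcal{F})}(\mathrm{H}^{d-1}_\mathfrak{M}(G(\mathcal{F}))) = \lim_{n \to -\infty} \ell(C^d_n)$, combined with the classical degreewise identification $[\mathrm{H}^i_\mathfrak{M}(\mathcal{R}'(\mathcal{F}))]_n \cong \mathrm{H}^i_\mathfrak{m}(R)$ for sufficiently negative $n$. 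For the converse, writing $T := (0:_{C^d} t^{-1})$, the same telescoping argument without any regularity assumption gives $\ell(\mathrm{H}^{d-1}_\mathfrak{M}(G(\mathcal{F}))) = \ell_R(\mathrm{H}^{d-1}_\mathfrak{m}(R)) + \ell(T)$, so the length equality is equivalent to $T = 0$. This vanishing is equivalent to the map $G(\mathcal{F})(a) \to C^d$ being zero, which makes $\mathrm{K}_{\mathcal{R}'(\mathcal{F})}(-1)/t^{-1}\mathrm{K}_{\mathcal{R}'(\mathcal{F})} \to G(\mathcal{F})(a)$ an isomorphism. Graded Nakayama (with $t^{-1} \in \mathfrak{M}$) then forces $\mathrm{K}_{\mathcal{R}'(\mathcal{F})}$ to be a cyclic graded $\mathcal{R}'(\mathcal{F})$-module, and faithfulness of the canonical module promotes the induced graded surjection $\mathcal{R}'(\mathcal{F})(a+1) \twoheadrightarrow \mathrm{K}_{\mathcal{R}'(\mathcal{F})}$ to an isomorphism, i.e.\ $\mathcal{R}'(\mathcal{F})$ is quasi-Gorenstein.

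The principal obstacle is the telescoping length identity $\ell(\mathrm{H}^{d-1}_\mathfrak{M}(G(\mathcal{F}))) = \lim_{n \to -\infty} \ell(C^d_n) + \ell(T)$ and the identification of the limit with $\ell_R(\mathrm{H}^{d-1}_\mathfrak{m}(R))$. This requires the classical comparison between the low-degree graded components of $\mathrm{H}^\bullet_\mathfrak{M}(\mathcal{R}'(\mathcal{F}))$ and $\mathrm{H}^\bullet_\mathfrak{m}(R)$ (arising from the localization $\mathcal{R}'(\mathcal{F})[1/t^{-1}] \cong R[t, t^{-1}]$), the finite-generation hypothesis on $\mathrm{H}^{d-1}_\mathfrak{M}(G(\mathcal{F}))$ to guarantee that each graded piece of $C^d$ has finite length and that the telescoping limit exists, and a careful faithfulness argument for $\mathrm{K}_{\mathcal{R}'(\mathcal{F})}$ to convert surjectivity of the lifted cyclic presentation into an isomorphism.
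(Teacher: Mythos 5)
Your skeleton is viable and, after localizing at $\fkM$, it shadows the paper's own argument: your $T=(0:_{C^d}t^{-1})$ is exactly the obstruction to the one-dimensional module $C^d=[\rmH^d_{\fkM}(\calR'(\calF))]^{\vee}$ being Cohen--Macaulay, and your bookkeeping identity $\ell(\rmH^{d-1}_{\fkM}(G(\calF)))=\ell(T)+L$ (with $L$ the stable value of $\ell(C^d_n)$) is the classical formula $\ell(M/t^{-1}M)-\ell(0:_Mt^{-1})=\rme_0(t^{-1},M)$ that the paper exploits through Theorem 3.1 and the associativity formula. The genuine gap sits in the one step that carries all the content: the identification $L=\ell_R(\rmH^{d-1}_{\m}(R))$. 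The ``classical degreewise identification $[\rmH^i_{\fkM}(\calR'(\calF))]_n\cong\rmH^i_{\m}(R)$ for $n\ll 0$'' that you invoke is false as stated: already for $R=k[[x]]$ with the $\m$-adic filtration one has $\rmH^1_{\fkM}(\calR'(\calF))=(0)$ while $\rmH^1_{\m}(R)\ne(0)$, and $[\rmH^2_{\fkM}(\calR'(\calF))]_n\ne(0)$ for every $n$ while $\rmH^2_{\m}(R)=(0)$. The true comparison carries a cohomological shift ($\rmH^d_{\fkM}(\calR'(\calF))$ is governed by $\rmH^{d-1}_{\m}(R)$, not by $\rmH^d_{\m}(R)$; with your index $i=d$ your limit would be $\ell_R(\rmH^d_{\m}(R))=\infty$), and it cannot come from the localization $\calR'(\calF)[1/t^{-1}]\cong R[t,t^{-1}]$ alone, since $\fkM$ expands to the unit ideal there and $\rmH^i_{\fkM}(R[t,t^{-1}])=(0)$ for all $i$.

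To close this you need: (i) $\Supp_{\calR'(\calF)}C^d\setminus\{\fkM\}\subseteq\{\m'\}$, where $\m'=\m R[t,t^{-1}]\cap\calR'(\calF)$ (this is where the paper's Lemma 4.2 on the non-Cohen--Macaulay locus, or at least the bijectivity of $t^{-1}$ on $C^d_P$ for $P\ne\fkM$, enters); (ii) consequently $C^d$ is killed by a power of $\m'$, hence is bounded above in degree with finite-length graded pieces --- without this your telescoping sum acquires a second boundary term $\lim_{n\to+\infty}\ell(C^d_n)$ that you have not shown to vanish; and (iii) the identification of the stable value $L=\ell_{\calR'(\calF)_{\m'}}(C^d_{\m'})$ with $\ell_R(\rmH^{d-1}_{\m}(R))$ via local duality over $\calR'(\calF)_{\m'}\cong R[t]_{\m R[t]}$. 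These are precisely the paper's computation of $\Assh M$ and of $\rme_0(t^{-1},M)$, so they are not optional refinements. Two smaller gaps: in the forward direction the vanishing of $T$ is not tautological --- you must embed the finite-length cokernel of $G(\calF)(a)\hookrightarrow\rmK_{G(\calF)}$ into $\rmH^1_{\fkN}(G(\calF))$ and use $\depth G(\calF)\ge d-1\ge 2$; and the final step ``cyclic and faithful implies free'' requires checking that $\calR'(\calF)$ is equidimensional so that $\rmK_{\calR'(\calF)}$ is actually faithful.
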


This paper is organized as follows. In Section 2, we review fundamental concepts and results pertaining to canonical modules, modules with finite local cohomology (FLC), and blow-up algebras associated with filtrations of ideals. Additionally, we present a refined version of the well-known result \cite[Corollary 2.7]{VV} concerning regular sequences on associated graded rings. Section 3 focuses on the conditions under which the quasi-Gorenstein property is preserved under deformations, utilizing the Cohen-Macaulay property of the Matlis dual of local cohomology modules. Finally, in Section 4, we provide a proof of Theorem \ref{main}.

Throughout this paper, unless otherwise specified, we use the following terminology and notation. For a commutative ring $R$ and an $R$-module $N$, let $\ell_R(N)$ denote the length of $N$. When $(R, \m)$ is a Noetherian local ring, we denote by $\widehat{R}$ the $\m$-adic completion of $R$. The Matlis dual functor is denoted by $(-)^{\vee} = \Hom_R(-, \rmE_R(R/\m))$, where $\rmE_R(R/\m)$ is the injective envelope of $R/\m$. Let $\rmH_\m^i(-)$ be the $i$-th local cohomology functor with respect to $\m$. Furthermore, for an $\m$-primary ideal $I$ in $R$ and a finitely generated $R$-module $M$ with $s = \dim_RM$, there exist integers $\rme_i(I, M)$, called the {\it Hilbert coefficients} of $M$ with respect to $I$, satisfying the equality
$$
\ell_R(M/I^{n+1}M) = \rme_0(I, M) \binom{n+s}{s} - \rme_1(I, M) \binom{n+s-1}{s-1} + \cdots + (-1)^s \rme_s(I, M)
$$
for all $n \gg 0$. When $R = \bigoplus_{n \in \Bbb Z}R_n$ is a Noetherian $\Bbb Z$-graded ring with unique graded maximal ideal $\fkM$, we denote by $\rmH_\fkM^i(-)$ the $i$-th graded local cohomology functor with respect to $\fkM$.


\section{Preliminaries}

In this section, we provide an overview of the preliminaries that will be utilized throughout this paper. Let $(R, \m)$ be a Noetherian local ring with $d = \dim R$. 
For a finitely generated $R$-module $M$, we define $\Assh_RM = \{\p \in \Supp_RM \mid \dim R/\p = \dim_RM\}$. For an ideal $I$ of $R$, let $V(I)$ denote the set of all prime ideals of $R$ containing $I$.

\subsection{Canonical modules}
Recall that a {\it canonical module} $K$ of $R$ is a finitely generated $R$-module satisfying the isomorphism
$$
\widehat{R} \otimes_{R} K \cong \Hom_{\widehat{R}}(\rmH^d_{\widehat{\m}}(\widehat{R}), \rmE_{\widehat{R}}(\widehat{R}/\widehat{\m})) 
$$
(\cite[Definition~5.6]{HK}). The canonical module is uniquely determined up to isomorphism (\cite[(1.5)]{A}; see also \cite[Lemma~5.8]{HK}), provided it exists. We denote the canonical module by $\rmK_R$. A canonical module exists for the ring $R$ if $R$ is a homomorphic image of a Gorenstein ring, and the converse also holds when $R$ is Cohen-Macaulay (\cite{Re, S2}).

Now assume that the canonical module $\rmK_R$ exists. For every $\p \in \Supp_R \rmK_R$, the localization $(\rmK_R)_\p$ serves as the canonical module of $R_\p$ (\cite[(1.6)]{A}). Moreover, it holds that 
$$
\Supp_R \rmK_R = \{\p \in \Spec R \mid \dim R_\p + \dim R/\p = d \} 
$$
(\cite[(1.9)]{A}). 
Additionally, for every $\p \in \Supp_R \rmK_R$, any subsystem of parameters for $R_\p$ of length at most $2$ forms a $\rmK_R$-regular sequence. Thus, $\rmK_R$ satisfies Serre's $(S_2)$-condition (\cite[(1.10)]{A}). Here, a finitely generated $R$-module $M$ satisfies {\it Serre's $(S_n)$-condition}, if $\depth_{R_\p}M_\p \ge \inf\{n, \dim_{R_\p}M_\p\}$ for every $\p \in \Spec R$.


\subsection{FLC modules}

Following \cite{STC}, a finitely generated $R$-module $M$ is said to have \textit{finite local cohomology} (abbr. FLC) if $\rmH^i_{\m}(M)$ is finitely generated (or equivalently, of finite length) for all $i \neq \dim_R M$. For instance, if either $\dim_R M \leq 1$ or $M$ is a Buchsbaum module (e.g., Cohen-Macaulay), then $M$ possesses FLC. Thus, modules with FLC can be regarded as a generalization of Buchsbaum modules, and they are occasionally referred to as \textit{generalized Buchsbaum} modules or \textit{generalized Cohen-Macaulay} modules. 


Note that $M$ has FLC if and only if the $\widehat{R}$-module $\widehat{M}$ does as well. When $s = \dim_RM \ge 1$, the condition for the $R$-module $M$ to have FLC is equivalent to the condition that the supremum ${\Bbb I}(M) = \sup_\q (\ell_R(M/\q M) - \rme_0(\q, M))$, taken over all parameter ideals $\q$ of $M$, is finite (\cite[(3.3) Satz]{STC}, \cite[Lemma 1.1]{T}). Furthermore, it is also equivalent to the condition that there exists an integer $\ell \gg 0$ such that every system of parameters contained in $\m^{\ell}$ acts as a $d$-sequence on $M$ (\cite[Theorem]{GO}). In this case, the equality 
$$
{\Bbb I}(M) = \sum_{i=0}^{s-1}\binom{s-1}{i}\ell_R(\rmH^i_\m(M))
$$ holds (\cite[(3.7) Satz]{STC}, \cite[Lemma 1.5]{T}). 
If $M$ has FLC with $s=\dim_RM \ge 1$, the localization $M_\p$ at $\p \in \Supp_RM \setminus\{\m\}$ is necessarily a Cohen-Macaulay $R_\p$-module satisfying the equality 
$$
\dim_RM=\dim_{R_\p}M_\p + \dim R/\p.
$$ 
The converse holds if $R$ is a homomorphic image of a Cohen-Macaulay ring (\cite[(2.5) Satz]{STC}, \cite[(1.17)]{AG}, see also \cite[Corollary 1.2]{K}). 
Consequently, all normal isolated singularities that appear as homomorphic images of Cohen-Macaulay rings possess FLC. 
For more details on FLC modules, the reader may refer to \cite{AG, GO, HIO, STC, SV, T}, and others.
 


\subsection{Blow-up algebras associated with filtrations}

Let $\calF=\{F_n\}_{n \in \Bbb Z}$ be a filtration of ideals in $R$, meaning that $F_n$ is an ideal of $R$, $F_n \supseteq F_{n+1}$, $F_m F_n \subseteq F_{m+n}$ for all $m, n \in \Bbb Z$, and $F_0 = R$.  
Define
$$
\calR'(\calF) = \sum_{n \in \Bbb Z}F_nt^n \subseteq R[t, t^{-1}] \ \ \text{and} \ \ G(\calF) = \calR'(\calF)/t^{-1}\calR'(\calF) \cong \bigoplus_{n \ge 0}F_n/F_{n+1},
$$
as the {\it extended Rees algebra of $\calF$} and the {\it associated graded ring of $\calF$}, respectively, where $t$ is an indeterminate over $R$.
Recall that the filtration $\calF=\{F_n\}_{n \in \Bbb Z}$ is {\it Noetherian}, if the ring $\calR'(\calF)$ is Noetherian. We say that the filtration $\calF=\{F_n\}_{n \in \Bbb Z}$ is 
{\it Hilbert}, if $F_1$ is $\m$-primary and $F_{n+1} = F_1F_n$ for all $n \gg 0$.
In addition, for an ideal $I$ of $R$, we denote by $\calF/I$ the filtration $\{(F_n + I)/I\}_{n \in \Bbb Z}$ of ideals in $R/I$. 


Examples of Hilbert filtrations are plentiful. Beyond the classical instance of the ideal-adic filtration, when $R$ is analytically unramified, the filtration $\{\overline{I^n}\}_{n \in \Bbb Z}$, consisting of the integral closures of powers of an $\m$-primary ideal $I$, constitutes the Hilbert filtration (\cite{Rees}). Furthermore, the filtration $\{\widetilde{(I^n)}\}_{n \in \Bbb Z}$, formed by the Ratliff-Rush closures of the powers of $I$, also qualifies as the Hilbert filtration.

The following constitutes a generalization of the works of \cite[Proposition 6]{H} and \cite[Corollary 2.7]{VV}, while also serving as a partial generalization of 
\cite[Proposition 3.5]{HM}. Although this may be familiar to experts in the field, we include a proof for the sake of completeness.

\begin{prop}\label{regseq}
Let $(R, \m)$ be a Noetherian local ring and $\calF =\{F_n\}_{n \in \Bbb Z}$ the Hilbert filtration of ideals in $R$. Let $a_1, a_2, \ldots, a_r \in R~(r>0)$ such that $a_i \in F_{n_i}$ with $n_i \ge 0$. Then the following conditions are equivalent.
\begin{enumerate}
\item[$(1)$] $a_1t^{n_1}, a_2t^{n_2}, \ldots, a_rt^{n_r} \in \calR'(\calF)$ forms a regular sequence on $G(\calF)$.
\item[$(2)$] $a_1, a_2, \ldots, a_r$ forms a regular sequence on $R$ and the equality $(a_1, a_2, \ldots, a_r) \cap F_n = \displaystyle\sum_{j=1}^r a_jF_{n-n_j}$ holds for all $n \in \Bbb Z$.
\end{enumerate}
When this is the case, one has an isomorphism
$$
G(\calF)/(a_1t^{n_1}, a_2t^{n_2}, \ldots, a_rt^{n_r})G(\calF) \cong G(\calF/(a_1, a_2, \ldots, a_r))
$$
of rings.
\end{prop}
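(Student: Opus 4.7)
The plan is to induct on $r$, with the case $r = 1$ being the main content, namely a Valla-Valabrega-type assertion adapted to Hilbert filtrations. A key preliminary is that the Hilbert property forces $\bigcap_{n \ge 0} F_n = 0$: from $F_{n+1} = F_1 F_n$ for $n \gg 0$ one obtains $F_n \subseteq F_1^{n - n_0} \subseteq \m^{n - n_0}$ for large $n$, and Krull's intersection theorem applies. Consequently every nonzero $x \in R$ has a well-defined order $v(x) := \max\{k \ge 0 : x \in F_k\}$ and a nonzero initial form $x^* := x + F_{v(x)+1}$ in $G(\calF)$.

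For $r = 1$, write $a = a_1$ and $n = n_1$, so that $a^* := at^n$ lies in $[G(\calF)]_n = F_n/F_{n+1}$. For $(2) \Rightarrow (1)$: if $\overline{x} \in F_m/F_{m+1}$ satisfies $a^* \overline{x} = 0$ then $ax \in F_{m+n+1}$, so $ax \in (a) \cap F_{m+n+1} = a F_{m+1}$, and the $R$-regularity of $a$ yields $x \in F_{m+1}$. Conversely, if $a^*$ is a non-zerodivisor on $G(\calF)$, then $ax = 0$ with $x \ne 0$ would give $a^* x^* = 0$ with $x^* \ne 0$, a contradiction; so $a$ is $R$-regular. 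The inclusion $a F_{m-n} \subseteq (a) \cap F_m$ is immediate (using $F_k = R$ for $k \le 0$); for the reverse, given $ax \in F_m$ with $x \ne 0$ and $k = v(x)$, the assumption $k < m - n$ would force $a^* x^* = 0$, hence $x^* = 0$, contradicting $v(x) = k$.

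For $r > 1$, assume the full statement (both directions plus the isomorphism) for fewer than $r$ elements. In either direction, the inductive hypothesis applied to $a_1, \ldots, a_{r-1}$ delivers a regular sequence $a_1 t^{n_1}, \ldots, a_{r-1} t^{n_{r-1}}$ on $G(\calF)$ together with the isomorphism
\begin{equation*}
G(\calF)/(a_1 t^{n_1}, \ldots, a_{r-1} t^{n_{r-1}}) G(\calF) \cong G(\overline{\calF}), \qquad \overline{\calF} := \calF/(a_1, \ldots, a_{r-1}),
\end{equation*}
under which $a_r t^{n_r}$ corresponds to $\overline{a}_r t^{n_r}$ with $\overline{a}_r \in \overline{F}_{n_r}$, whereupon the $r = 1$ case applied to $\overline{R}$ and $\overline{\calF}$ closes the argument. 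For $(1) \Rightarrow (2)$, the intersection identity lifts easily: given $x \in (a_1, \ldots, a_r) \cap F_m$, one has $\overline{x} \in (\overline{a}_r) \cap \overline{F}_m = \overline{a}_r \overline{F}_{m - n_r}$, so $\overline{x} = \overline{a}_r \overline{y}$ for some $y \in F_{m - n_r}$, and then $x - a_r y \in (a_1, \ldots, a_{r-1}) \cap F_m = \sum_{j < r} a_j F_{m - n_j}$ by induction, yielding $x \in \sum_j a_j F_{m - n_j}$. For $(2) \Rightarrow (1)$ the more delicate step is verifying that (2) for $a_1, \ldots, a_r$ implies (2) for $a_1, \ldots, a_{r-1}$, which I carry out by a secondary induction on $m$ (trivial for $m \le 0$) using the identity at $m - n_r$ together with the $R$-regularity of $a_r$ modulo $(a_1, \ldots, a_{r-1})$.

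For the stated ring isomorphism, compare degree $m$ components: the quotient $G(\calF)/(a_1 t^{n_1}, \ldots, a_r t^{n_r})G(\calF)$ has component $F_m/(F_{m+1} + \sum_j a_j F_{m-n_j})$, while $G(\overline{\calF})$ has component $F_m/(F_{m+1} + F_m \cap (a_1, \ldots, a_r))$ (using $F_{m+1} \subseteq F_m$), and these agree by condition (2). The main obstacle is the secondary induction on $m$ in the $(2) \Rightarrow (1)$ step; because the hypothesis (2) is symmetric in the pairs $(a_j, n_j)$, one may reorder to place elements with positive $n_j$ last, which handles the only delicate edge case.
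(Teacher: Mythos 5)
Your overall architecture (the $r=1$ analysis via initial forms, induction on $r$ through the quotient filtration $\calF/(a_1,\dots,a_{r-1})$, and the graded-component comparison for the isomorphism) tracks the paper's proof closely, and the separatedness preliminary, the $r=1$ case, and the $(1)\Rightarrow(2)$ direction are all fine. The genuine problem is in $(2)\Rightarrow(1)$, at precisely the step you flag as delicate: deducing condition $(2)$ for $a_1,\dots,a_{r-1}$ from condition $(2)$ for $a_1,\dots,a_r$. Your secondary induction on $m$ works only when $n_r>0$, since only then does the element $y_r\in J_{r-1}\cap F_{m-n_r}$ land in a strictly smaller degree where the inductive hypothesis applies. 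The proposed fix --- reorder so that positive-degree elements are peeled off first --- does not dispose of the edge case: as soon as two or more of the $a_j$ have $n_j=0$, some peeling step must remove a degree-zero element while at least one other element remains, and there the induction goes in a circle (from $x=\sum_{j<r}a_jy_j+a_ry_r$ and the regularity of $a_r$ modulo $J_{r-1}$ one only gets $y_r\in J_{r-1}\cap F_m$, the same degree $m$ you started with). Already for $r=2$ with $n_1=n_2=0$ your argument proves nothing.

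What is missing is the Artin--Rees/Krull-intersection argument the paper supplies for exactly this case: with $L=\sum_{j<r}a_jF_{m-n_j}$, iterating the containment gives $J_{r-1}\cap(a_rF_m)\subseteq(a_r^kF_m)+L\subseteq(\m^kF_m)+L$ for every $k>0$ (this uses $a_r\in\m$, which follows from $J_r\neq R$), and then $\bigcap_{k>0}\bigl[(\m^kF_m)+L\bigr]=L$ by Krull's intersection theorem applied to $F_m/L$. With that supplement your proof closes; without it the degree-zero case is a genuine gap. A minor further point: at the end of $(2)\Rightarrow(1)$ you should also record that $G(\calF)/(a_1t^{n_1},\dots,a_rt^{n_r})G(\calF)\neq(0)$ (the paper deduces this from $a_j\in\m$ and $F_1$ being $\m$-primary), since otherwise the sequence is not a regular sequence in the usual sense.
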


\begin{proof}
$(1) \Rightarrow (2)$ By induction on $r$. Suppose $r=1$. Let $x \in R$ with $a_1 x = 0$. Since the Hilbert filtration is separated, if $x \ne 0$, we can choose $n \ge 0$ such that $x \in F_n \setminus F_{n+1}$. Then
$$
a_1 t^{n_1} \cdot \overline{xt^n} = \overline{(a_1x)t^{n_1+n}} = 0
$$
where $\overline{(-)}$ denotes the image in $G(\calF)$. Since $a_1 t^{n_1}$ is $G(\calF)$-regular, it follows that $\overline{xt^n}=0$ in $G(\calF)$. This implies $x \in F_{n+1}$, which contradicts $x \in F_n \setminus F_{n+1}$. Hence $x=0$. If $a_1 \not\in \m$, then $n_1=0$ because $a \in F_{n_1}$. Thus, $a_1 t^{n_1} = a_1$ is a unit in $\calR'(\calF)$, a contradiction. Therefore $a \in \m$ is $R$-regular. Next, we will show that $(a_1) \cap F_n = a_1 F_{n-n_1}$ holds for all $n \in \Bbb Z$. Indeed, let $x \in (a_1) \cap F_n$, and write $x = a_1 y$ for some $y \in R$. Suppose $y \notin F_{n-n_1}$. Choose $\ell \ge 0$ such that $y \in F_{\ell}\setminus F_{\ell+1}$. Then $n-n_1 \ge \ell + 1$. Consequently, $a_1 y = x \in F_n \subseteq F_{n_1 + \ell + 1}$. Since $a_1 t^{n_1}$ is $G(\calF)$-regular, we have $y \in F_{\ell+1}$, a contradiction. Thus, $y \in F_{n-n_1}$, and hence $(a_1) \cap F_n \subseteq a_1 F_{n-n_1}$. The converse inclusion follows from $a_1 \in F_{n_1}$. Therefore, $(a_1) \cap F_n = a_1 F_{n-n_1}$. 
The canonical surjection $\varepsilon : R \to R/(a_1)$ induces a surjective graded ring homomorphism 
$$
\varphi : G(\calF) \to G(\calF/(a_1)) 
$$
defined by $\varphi(\overline{xt^n}) = \overline{(x+F_{n+1})t^n}$ for each $x \in F_n$. For $x \in F_n$, we have $xt^{n} \in \Ker \varphi$ if and only if $x \in (F_{n+1} + (a_1)) \cap F_n$, or equivalently $x \in F_{n+1} + a_1F_{n-n_1}$, because 
$(a_1) \cap F_n=a_1F_{n-n_1}$. Hence $\Ker \varphi = (a_1t^{n_1})$, and therefore $
G(\calF)/(a_1t^{n_1})G(\calF) \cong G(\calF/(a_1))$. Thus, $(2)$ holds for $r=1$.

Now, assume $r \ge 2$ and that $(2)$ holds for $r-1$. Since $G(\calF)/(a_1t^{n_1})G(\calF) \cong G(\calF/(a_1))$, the sequence $a_2t^{n_2}, a_3t^{n_3}, \ldots, a_rt^{n_r}$ is $G(\calF/(a_1))$-regular. Note that $\calF/(a_1)$ is the Hilbert filtration of ideals in $R/(a_1)$. By the induction hypothesis, $a_2, a_3, \ldots, a_r$ is an $R$-regular sequence, and the equality
$$
(a_2, a_3, \ldots, a_r) \overline{R} \cap F_n \overline{R} = \sum_{j=2}^r a_jF_{n-n_j}\overline{R}
$$
holds, where $\overline{R} = R/(a_1)$. In particular, $a_1, a_2, \ldots, a_r \in \m$ forms an $R$-regular sequence. Finally, let $x \in (a_1, a_2, \ldots, a_r) \cap F_n$. We can choose $y_i \in F_{n-n_i}$ such that $x-(a_2y_2 + a_3y_3 + \cdots + a_ry_r) \in (a_1)$. Since $a_iy_i \in F_n$ for all $2 \le i \le r$, it follows that $x-(a_2y_2 + a_3y_3 + \cdots + a_ry_r) \in (a_1) \cap F_n = a_1 F_{n-n_1}$. Thus $x \in  \sum_{j=1}^r a_jF_{n-n_j}$. Consequently, $(a_1, a_2, \ldots, a_r) \cap F_n =\sum_{j=1}^r a_jF_{n-n_j}$. Furthermore, the induction hypothesis implies
\begin{eqnarray*}
G(\calF)/(a_1t^{n_1}, a_2t^{n_2}, \ldots, a_rt^{n_r})G(\calF) \!\!\!\!&\cong&\!\!\!\! G(\calF/(a_1))/(a_2t^{n_2}, a_3t^{n_3}, \ldots, a_rt^{n_r})G(\calF/(a_1)) \\
\!\!\!\!&\cong&\!\!\!\! G(\left(\calF/(a_1)\right)/(a_2, a_3, \ldots, a_r)\left(\calF/(a_1)\right)) \\
\!\!\!\!&\cong&\!\!\!\! G(\calF/(a_1, a_2, \ldots, a_r)). 
\end{eqnarray*}
Thus, the result follows.

$(2) \Rightarrow (1)$ Let $J_i = (a_1, a_2, \ldots, a_i)$ for each $1 \le i \le r$. We prove by descending induction that
$$
J_i \cap F_n = \sum_{j=1}^ra_j F_{n-n_j}
$$
holds for all $1 \le i \le r$ and $n \in \Bbb Z$. 
Assuming the assertion holds for $i+1$, we proceed by induction on $i$. Without loss of generality, we assume $1 \le i < r$ and that the assertion holds for $i+1$. We now further proceed by induction on $n$. Notice that the desired equality holds trivially for $n \le 0$. Let $n > 0$, and assume that the equality holds for $n-1$. By defining $L= \sum_{j=1}^{i}a_j F_{n-n_j}$, we obtain
$$
J_i \cap F_n \subseteq J_{i+1} \cap F_{n} = \sum_{j=1}^{i+1}a_j F_{n-n_j} = L + a_{i+1}F_{n-n_{i+1}}.
$$
Thus, $J_i \cap F_n \subseteq \left[L + (a_{i+1}F_{n-n_{i+1}})\right] \cap J_i = L + \left[J_i \cap (a_{i+1}F_{n-n_{i+1}})\right]$. It remains to verify that
$$
J_i \cap (a_{i+1}F_{n-n_{i+1}}) \subseteq L. 
$$
Let $x \in J_i \cap (a_{i+1}F_{n-n_{i+1}})$, and write $x = a_{i+1}y$ with $y \in F_{n-n_{i+1}}$. As $a_{i+1}y =x \in F_i$ and $a_{i+1}$ is a non-zerodivisor modulo $J_i$, we have $y \in J_i \cap F_{n-n_{i+1}}$. 
\begin{case}
$n_{i+1}>0$
\end{case}
In this case,  $n-n_{i+1} < n$. By the induction hypothesis on $n$, we have 
$
y \in J_i \cap F_{n-n_{i+1}} = \sum_{j=1}^ia_j F_{(n-n_{i+1})-n_j}.
$
Since $a_{i+1} \in F_{n_{i+1}}$, it follows that $x = a_{i+1}y \in \sum_{j=1}^i a_j F_{(n-n_{i+1})-n_j}\cdot F_{n_{i+1}} \subseteq \sum_{j=1}^{i}a_j F_{n-n_j} = L$,
as desired. 
\begin{case}
$n_{i+1} = 0$
\end{case}
We first show that $J_i \cap (a_{i+1}F_n) \subseteq (a_{i+1}^m F_n) + L$ for all $m>0$. If $m = 1$, this inclusion is immediate. Assume $m > 1$, and that the inclusion holds for  $m-1$. Let $x \in J_i \cap (a_{i+1}F_n)$. Then $x = a_{i+1}^{m-1}y + \ell$ for some $y \in F_n$ and $\ell \in L$. Thus, $a_{i+1}^{m-1}y = x - \ell \in J_i$, and since $a_{i+1}$ is $R/J_i$-regular, it follows that $y \in J_i$. Therefore $y \in J_i \cap F_n \subseteq J_{i+1} \cap F_n \subseteq L+ a_{i+1}F_n$. Hence $x =a_{i+1}^{m-1}y + \ell \in a_{i+1}^m F_n + L$, that is, $J_i \cap (a_{i+1}F_n) \subseteq (a_{i+1}^m F_n) + L$. 
Taking the intersection over all $m>0$, we obtain 
$$
J_i \cap (a_{i+1}F_n) \subseteq \bigcap_{m>0} \left[(a_{i+1}^m F_n) + L\right] \subseteq \bigcap_{m>0} \left[(\m^m F_n) + L\right] \subseteq L.
$$
Consequently, $J_i \cap (a_{i+1}F_{n-n_{i+1}}) \subseteq L$. 
\medskip

In conclusion, for both cases, we have $J_i \cap (a_{i+1}F_{n-n_{i+1}}) \subseteq L$, which implies $J_i \cap F_n = L =\sum_{j=1}^{i}a_j F_{n-n_j}$. 
Next, it is straightforward to verify that the equality
$$
(a_1t^{n_1}, a_2 t^{n_2}, \ldots, a_i t^{n_i})G(\calF):_{G(\calF)} a_{i+1}t^{n_{i+1}} = (a_1t^{n_1}, a_2 t^{n_2}, \ldots, a_i t^{n_i})G(\calF)
$$
holds for all $0 \le i <r$. Consider a surjective graded ring homomorphism 
$$
\varphi : G(\calF) \to G(\calF/J_r) 
$$
defined by $\varphi(\overline{xt^n}) = \overline{(x+F_{n+1})t^n}$ for each $x \in F_n$. Since $\Ker \varphi = (a_1t^{n_1}, a_2 t^{n_2}, \ldots, a_r t^{n_r})G(\calF)$, we obtain the isomorphism
$G(\calF)/ (a_1t^{n_1}, a_2 t^{n_2}, \ldots, a_r t^{n_r})G(\calF) \cong G(\calF/J_r)$. Finally, because $a_1, a_2, \ldots, a_r \in \m$ and $F_1$ is $\m$-primary, we have $(F_1 + J_r)/J_r\ne R/J_r$. This implies $G(\calF/J_r) \ne (0)$. Therefore, $a_1t^{n_1}, a_2t^{n_2}, \ldots, a_rt^{n_r} \in \calR'(\calF)$ forms a regular sequence on $G(\calF)$.
\end{proof}



\begin{prop}\label{depth}
Let $(R, \m)$ be a Noetherian local ring and $\calF =\{F_n\}_{n \in \Bbb Z}$ the Hilbert filtration of ideals in $R$. Suppose that $\depth G(\calF) = r > 0$. Then there exists homogeneous elements $f_1, f_2, \ldots, f_r$ in $\calR'(\calF)$ of non-negative degree such that $f_1, f_2, \ldots, f_r$ forms a regular sequence on $G(\calF)$.
\end{prop}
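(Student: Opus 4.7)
The plan is to build the sequence $f_1, f_2, \ldots, f_r$ inductively, at each stage invoking graded prime avoidance to select the next homogeneous regular element and then using Proposition \ref{regseq} to set up the subsequent step.

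For the setup, note that $G(\calF) = \bigoplus_{n \ge 0} F_n/F_{n+1}$ is a Noetherian $\Bbb N$-graded ring with unique graded maximal ideal $\fkN = (\m/F_1) \oplus \bigoplus_{n \ge 1} F_n/F_{n+1}$. Every homogeneous element of $G(\calF)$ has non-negative degree and is of the form $\overline{at^n}$ for some $a \in F_n$ with $n \ge 0$, where $\overline{(-)}$ denotes the image under $\calR'(\calF) \twoheadrightarrow G(\calF)$. The hypothesis $\depth G(\calF) = r > 0$ means that $\fkN$ is not contained in the union of the finitely many associated primes of $G(\calF)$. Since these associated primes are homogeneous---a standard property of Noetherian graded rings---graded prime avoidance produces a homogeneous element $\bar u_1 \in \fkN$ that is a non-zerodivisor on $G(\calF)$. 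Writing $\bar u_1 = \overline{a_1 t^{n_1}}$ with $a_1 \in F_{n_1}$ and $n_1 \ge 0$, I would set $f_1 = a_1 t^{n_1} \in \calR'(\calF)$.

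For the inductive step, assume that $f_1 = a_1 t^{n_1}, \ldots, f_i = a_i t^{n_i}$ of non-negative degrees have been produced in $\calR'(\calF)$ and form a regular sequence on $G(\calF)$, where $i < r$. By Proposition \ref{regseq}, there is a graded isomorphism
$$G(\calF)/(\bar f_1, \ldots, \bar f_i)G(\calF) \cong G(\calF/(a_1, \ldots, a_i)),$$
and the quotient filtration $\calF/(a_1, \ldots, a_i)$ on $R/(a_1, \ldots, a_i)$ is again Hilbert: its first term is primary to the maximal ideal of $R/(a_1, \ldots, a_i)$, and the relation $F_{n+1} = F_1 F_n$ (valid for $n \gg 0$) descends to the quotient. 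Because $f_1, \ldots, f_i$ is regular on $G(\calF)$, one has $\depth G(\calF/(a_1, \ldots, a_i)) = r - i > 0$, so applying the base-step argument to this ring yields a homogeneous regular element of some degree $n_{i+1} \ge 0$, represented by $\overline{\bar a_{i+1} t^{n_{i+1}}}$ for some $a_{i+1} \in F_{n_{i+1}}$. Setting $f_{i+1} = a_{i+1} t^{n_{i+1}}$, the isomorphism above identifies $\bar f_{i+1}$ with this regular element, so $f_1, \ldots, f_{i+1}$ remains a regular sequence on $G(\calF)$. Iterating $r$ times produces the required sequence.

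I do not anticipate a substantive obstacle. The essential ingredients---graded prime avoidance for Noetherian graded rings, the homogeneity of associated primes, and the stability of the Hilbert property under the quotient filtrations $\calF/(a_1, \ldots, a_i)$---are standard, while Proposition \ref{regseq} packages the passage between $G(\calF)$ and $G(\calF/(a_1, \ldots, a_i))$ exactly as required by the induction. The only mild point is verifying that the regular element can be lifted back to $\calR'(\calF)$ in non-negative degree, but this is automatic since each $G(\calF/(a_1, \ldots, a_i))$ is itself $\Bbb N$-graded and every one of its homogeneous elements is represented by some $\overline{\bar{a}\,t^n}$ with $\bar a \in F_n/(a_1,\dots,a_i)$ liftable to $F_n$.
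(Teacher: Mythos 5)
Your strategy is the same as the paper's: produce the first homogeneous regular element by prime avoidance, then induct through the isomorphism $G(\calF)/(\overline{f_1},\ldots,\overline{f_i})G(\calF)\cong G(\calF/(a_1,\ldots,a_i))$ supplied by Proposition \ref{regseq}. The inductive step is handled correctly. The gap is in the base step: you invoke ``graded prime avoidance'' for the ideal $\fkN$, but $\fkN=\m/F_1\oplus\bigoplus_{n\ge1}F_n/F_{n+1}$ has a nonzero degree-zero component, and homogeneous prime avoidance is a standard fact only for graded ideals generated in positive degrees. For graded ideals with degree-zero generators it can fail outright: in $A=(\Bbb F_2\times\Bbb F_2)[x]$ with $a=(1,0)$, every homogeneous element of $I=(a,x)$ lies in $(a)\cup(1-a,\,x)$, although $I$ is contained in neither of these primes (the element $a+x$ avoids both, but it is not homogeneous). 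So the implication ``$\fkN$ is not contained in the union of the associated primes, hence some \emph{homogeneous} element of $\fkN$ avoids them all'' is not a consequence of the standard lemma and needs an argument.

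The missing ingredient --- which is precisely what the paper establishes first --- is that $\Ass G(\calF)\subseteq \Proj G(\calF)$, i.e.\ no associated prime contains $G(\calF)_+$. The paper deduces this from the fact that $G(\calF)_+$ is a reduction of $\fkN$ (using that $F_1$ is $\m$-primary); alternatively, a graded prime containing $G(\calF)_+$ corresponds to a prime of the Artinian local ring $G(\calF)_0=R/F_1$ and so must equal $\fkN$, which is excluded because $\depth G(\calF)>0$. Once this is in place, one applies prime avoidance to the positively generated ideal $G(\calF)_+$ and obtains a homogeneous non-zerodivisor of positive degree, after which your induction goes through verbatim. Note that this is the only point of the proof where the hypothesis that $\calF$ is a Hilbert filtration (i.e.\ that $F_1$ is $\m$-primary) enters; your base step never appeals to it, which is the symptom of the gap.
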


\begin{proof}
Let $\fkN$ denote the graded maximal ideal of $G(\calF)$. We aim to prove that $\Ass G(\calF) \subseteq \Proj G(\calF)$. Indeed, note that $\fkN \notin \Ass G(\calF)$. For each $P \in \Ass G(\calF)$, the ideal $P$ is graded and satisfies $P \subsetneq \fkN$. Assume, for the sake of contradiction, that $P \supseteq G(\calF)_+$, where $G(\calF)_+ = \bigoplus_{n>0}F_n/F_{n+1}$. Since $F_1$ is $\m$-primary and $\m G(\calF)$ is contained in the integral closure of $G(\calF)_+$, it follows that $G(\calF)_+$ is a reduction of $\fkN$. Consequently, we would have $P = \fkN$, which is a contradiction. Hence, $P \in \Proj G(\calF)$. Therefore, we conclude that $\Ass G(\calF) \subseteq \Proj G(\calF)$, as desired.

By employing the same technique as in the proof of \cite[Lemma 3.1]{NO}, for any subset $\calX \subseteq \Ass G(\calF)$ and  any graded ideal $I$ of $G(\calF)$ with $I \not\subseteq \bigcup_{P \in \calX}P$, there exists a homogeneous element $g \in I$ such that $g \not\in \bigcup_{P \in \calX}P$. In particular, we can choose a homogeneous $g \in \fkN$ such that $g \not\in \bigcup_{P \in \Ass G(\calF)}P$. Write $g = \overline{a_1t^{n_1}}$, where $a_1 \in F_{n_1}$ and $n_1 \ge 0$, with $\overline{(-)}$ denoting the image in $G(\calF)$. By Proposition \ref{regseq}, it follows that $a_1 \in \m$ is $R$-regular and that 
$$
G(\calF)/(a_1t^{n_1})G(\calF) \cong G(\calF/(a_1)).
$$
Since the filtration $\calF/(a_1)$ is Hilbert and $\depth G(\calF/(a_1)) = r-1$, the induction hypothesis ensures the existence of homogeneous elements $g_2, \ldots, g_r \in G(\calF/(a_1))$ of non-negative degree such that these elements form a regular sequence on $G(\calF/(a_1))$. Consequently, there exist homogeneous elements $f_1, f_2, \ldots, f_r$ in $\calR'(\calF)$ of non-negative degree such that $f_1, f_2, \ldots, f_r$ is $G(\calF)$-regular.
\end{proof}


\section{When does the quasi-Gorenstein property deform?}

In this section, we explore the conditions under which the quasi-Gorenstein property deforms. Let $(R, \m)$ be a Noetherian local ring with $d=\dim R >0$ that admits a canonical module $\rmK_R$. Note that $R$ is Gorenstein if $d \le 3$ and $R/xR$ is quasi-Gorenstein for some non-zerodivisor $x \in \m$. Thus, when addressing the deformation problem of the quasi-Gorenstein property, it is sufficient to focus on cases where $d \ge 4$ and $R$ is not Cohen-Macaulay. In such cases, one has $\depth R \ge 3$.




We begin with the following, which plays a key in our argument. 

\begin{thm}\label{3.1}
Let $(R, \m)$ be a Noetherian local ring with $d = \dim R \ge 4$ and $\depth R=d-1$, admitting the canonical module $\rmK_R$. Suppose that  there exists a non-zerodivisor $x \in \m$ on $R$ such that $R/xR$ is quasi-Gorenstein and $\rmH^{d-2}_{\m}(R/xR)$ is finitely generated as an $R$-module. 
Then the following assertions hold true, where $M$ denotes the Matlis dual of $\rmH^{d-1}_{\m}(R)$.
\begin{enumerate}
\item[$(1)$] $R$ is quasi-Gorenstein if and only if $M$ is a Cohen-Macaulay $R$-module with $\dim_RM = 1$.
\item[$(2)$] 
$\Supp_R M$ is the non-Cohen-Macaulay locus of $R$, i.e., the set of prime ideals $\p$ of $R$ for which the local ring $R_\p$ is not Cohen-Macaulay.
\item[$(3)$] If $\dim_R M = 1$, then $M$ is a Cohen-Macaulay $R$-module if and only if the equality
$$
\ell_R(\rmH^{d-2}_{\m}(R/xR)) = \sum_{\p \in \Assh_R M}\ell_{R_{\p}}(\rmH^{d-2}_{\p R_{\p}}(R_{\p}))\cdot \rme_0(x, R/\p)
$$
holds.
\end{enumerate}
\end{thm}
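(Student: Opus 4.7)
My plan is to derive everything from the long exact sequence of local cohomology attached to $0 \to R \xrightarrow{\ x\ } R \to R/xR \to 0$, after taking Matlis duals. The hypotheses $\depth R = d-1 \geq 3$ and $R/xR$ quasi-Gorenstein of dimension $d-1$ collapse the sequence: $\rmH^i_\m(R) = 0$ for $i \leq d-2$, and $\rmH^{d-1}_\m(R/xR) \cong E_{R/xR}(R/\m)$. After reducing to the complete case and dualizing (using $\rmH^d_\m(R)^\vee \cong \rmK_R$ and $E_{R/xR}(R/\m)^\vee \cong R/xR$), I obtain the central six-term exact sequence
\[ 0 \to \rmK_R \xrightarrow{\ x\ } \rmK_R \to R/xR \to M \xrightarrow{\ x\ } M \to N \to 0, \]
where $N := \rmH^{d-2}_\m(R/xR)^\vee$ is of finite length by hypothesis. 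In particular $M/xM \cong N$ has finite length, and $(0:_M x)$ is isomorphic to the cokernel of $\rmK_R/x\rmK_R \to R/xR$. This single sequence drives all three parts.

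For (1)($\Rightarrow$), if $\rmK_R \cong R$ the leftmost three terms become the Koszul exact sequence $0 \to R \xrightarrow{x} R \to R/xR \to 0$, so the connecting map $R/xR \to M$ vanishes, leaving $0 \to M \xrightarrow{x} M \to N \to 0$; thus $x$ is $M$-regular with $M/xM$ of finite length, whence $M$ is Cohen--Macaulay of dimension one (and nonzero because $\depth R < d$). Conversely, if $M$ is CM of dimension one, then $M/xM \cong N$ of finite length forces $x$ to be a parameter on $M$, hence $M$-regular by the CM hypothesis, so $(0:_M x) = 0$. The central sequence thus degenerates to an isomorphism $\rmK_R/x\rmK_R \xrightarrow{\sim} R/xR$. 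Lifting a preimage of $1$ to some $k \in \rmK_R$ yields a map $\phi : R \to \rmK_R$, $1 \mapsto k$; Nakayama's lemma applied modulo $\m$ (using $x \in \m$) gives surjectivity, and the regularity of $x$ on $\rmK_R$ together with Krull's intersection theorem gives injectivity (if $\phi(r) = 0$, then $r \in xR$, $r = xs$, and $xsk = 0$ forces $sk = 0$, so $s \in \ker\phi$; iterating gives $r \in \bigcap x^n R = 0$). Hence $R \cong \rmK_R$.

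For (2), I appeal to local duality through the normalized dualizing complex $D^\bullet$ of $R$ (which exists since $R$ is a homomorphic image of a Gorenstein ring), so $M \cong \rmH^{-(d-1)}(D^\bullet)$. For primes $\p$ with $\dim R_\p + \dim R/\p = d$, the localization $D^\bullet_{R_\p,\,\mathrm{norm}} \cong (D^\bullet)_\p[-\dim R/\p]$ combined with local duality over $R_\p$ yields
\[ M_\p \;\cong\; \rmH^{\dim R_\p - 1}_{\p R_\p}(R_\p)^{\vee_{R_\p}}, \]
which is nonzero precisely when $\depth R_\p < \dim R_\p$, that is, when $R_\p$ is not Cohen--Macaulay. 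The hypothesis that $\rmH^{d-2}_\m(R/xR)$ is finitely generated, together with the vanishing $\rmH^i_\m(R/xR) = 0$ for $i \leq d-3$ obtained from the LES via $\depth R = d-1$, shows that $R/xR$ has FLC; hence $(R/xR)_\p$ is CM for every $\p \neq \m$, which in turn forces $R_\p$ to be CM whenever $x \in \p$ and $\p \neq \m$. This pins down the support behaviour at primes outside the equidimensional locus $\Supp_R \rmK_R$ and confirms $\Supp_R M$ equals the non-CM locus.

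For (3), assume $\dim_R M = 1$. Since $M/xM \cong N$ has finite length, $x$ is a parameter on $M$. Combining the classical one-dimensional identity $\ell_R(M/xM) = \rme_0(x, M) + \ell_R((0:_M x))$ with the associativity formula $\rme_0(x, M) = \sum_{\p \in \Assh_R M} \ell_{R_\p}(M_\p) \cdot \rme_0(x, R/\p)$, and identifying $\ell_{R_\p}(M_\p) = \ell_{R_\p}(\rmH^{d-2}_{\p R_\p}(R_\p))$ for $\p \in \Assh_R M$ (via (2), noting $\dim R_\p = d-1$ and that Matlis duality preserves length), together with $\ell_R(M/xM) = \ell_R(N) = \ell_R(\rmH^{d-2}_\m(R/xR))$, the claimed length equality becomes equivalent to $\ell_R((0:_M x)) = 0$, hence to $x$ being $M$-regular, hence (given $\dim M = 1$) to $M$ being CM. I expect the main obstacle to be (2): the localization computation of $M_\p$ must be controlled carefully at primes $\p$ with $\dim R_\p + \dim R/\p < d$ or with $x \notin \p$, and the FLC consequence of the hypotheses has to be invoked precisely to rule out spurious support; the Nakayama-plus-Krull step in (1)($\Leftarrow$) is more routine but is the pivotal deformation mechanism.
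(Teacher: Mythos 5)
Your overall strategy is the same as the paper's: everything is driven by the Matlis dual of the long exact sequence of local cohomology, which is exactly the paper's engine. However, there are two genuine gaps at the decisive steps.

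First, in (1)($\Rightarrow$) you assert that when $\rmK_R \cong R$ ``the leftmost three terms become the Koszul exact sequence,'' so that the connecting map $R/xR \to M$ vanishes. This is circular: exactness of the six-term sequence only gives that the map $\rmK_R \to R/xR$ has kernel $x\rmK_R$, hence induces an \emph{injective} $\overline{R}$-endomorphism of $\overline{R}=R/xR$, i.e.\ multiplication by a nonzerodivisor $\bar a$; its cokernel is $C=(0):_M x$, and nothing formal forces $\bar a$ to be a unit. The vanishing of $C$ is precisely the content of the forward implication and needs a separate argument. The paper supplies it: localizing the tail of the sequence at $\p\neq\m$ shows $C_\p=(0)$, so $\ell_R(C)<\infty$, and then the depth lemma applied to $0 \to \rmK_R/x\rmK_R \to \overline{R} \to C \to 0$ would give $\depth \rmK_R = 2$, contradicting $\rmK_R\cong R$ and $\depth R = d-1\ge 3$. (Equivalently, once $\ell_R(C)<\infty$ is known, $C\cong \overline{R}/\bar a\overline{R}$ with $\bar a$ regular and $\dim\overline{R}=d-1\ge 3$ forces $\bar a$ to be a unit.) The paper also rules out $\dim_RM=0$ separately, using that $R$ would then have FLC and $M\cong \rmH^2_\m(\rmK_R)\cong\rmH^2_\m(R)=(0)$; in your write-up this is subsumed by the unproved regularity of $x$ on $M$.

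Second, in (2) the claim that $M_\p\cong [\rmH^{\dim R_\p-1}_{\p R_\p}(R_\p)]^{\vee}$ ``is nonzero precisely when $\depth R_\p<\dim R_\p$'' is false as a general statement: if $\depth R_\p\le \dim R_\p-2$ the module $\rmH^{\dim R_\p-1}_{\p R_\p}(R_\p)$ may well vanish while $R_\p$ fails to be Cohen--Macaulay. The missing input is the global bound $\depth R=d-1$: writing $R$ as an image of a Gorenstein complete local $S$ with $\dim S=d$ and $P=\p\cap S$, one has $\Ext^{i}_S(R,S)\cong[\rmH^{d-i}_\m(R)]^{\vee}=(0)$ for $i\ge 2$, so if $R_\p$ is not Cohen--Macaulay the only surviving obstruction is $\Ext^1_{S_P}(R_\p,S_P)\cong M_\p\neq(0)$; conversely, for $\p\in\Supp_RM\setminus\{\m\}$ one gets that $\rmH^{d-2}_{\p R_\p}(R_\p)$ is nonzero of \emph{finite length}, which is impossible for a Cohen--Macaulay local ring of positive dimension. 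Your sketch acknowledges (2) as the main obstacle but does not contain these two arguments, and your appeal to FLC of $R/xR$ does not by itself determine $\Supp_RM$ at primes with $x\notin\p$ or outside the locus $\dim R_\p+\dim R/\p=d$. Part (3) and the converse of (1) (the Nakayama-plus-Krull lifting of $\rmK_R/x\rmK_R\cong\overline{R}$ to $\rmK_R\cong R$) are correct and match, indeed slightly amplify, the paper.
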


\begin{proof}
Without loss of generality, we may assume that $R$ is $\m$-adically complete. Let $\overline{R}=R/xR$. Note that $M \ne (0)$. By applying the functor $\rmH^i_{\m}(-)$ to the sequence $
0 \to R \overset{x}{\to} R \to \overline{R} \to 0,
$
we obtain the  long exact sequence of $R$-modules
$$
0 \to \rmH^{d-2}_{\m}(\overline{R}) \to \rmH^{d-1}_{\m}(R) \overset{x}{\to} \rmH^{d-1}_{\m}(R) \to \rmH^{d-1}_{\m}(\overline{R}) \to \rmH^{d}_{\m}(R) \overset{x}{\to} \rmH^{d}_{\m}(R) \to 0.
$$
Taking Matlis duality yields the exact sequence
$$
0 \to \rmK_R \overset{x}{\to} \rmK_R \to \rmK_{\overline{R}} \to M 
 \overset{x}{\to} M \to [\rmH^{d-2}_{\m}(\overline{R})]^{\vee} \to 0
$$
which can be divided into the following three parts
$$
0 \to \rmK_R \overset{x}{\to} \rmK_R \to  \rmK_R/x \rmK_R \to 0, 
$$
$$
0 \to \rmK_R/x \rmK_R \to  \rmK_{\overline{R}} \to C \to 0, \ \ \text{and} 
$$
$$
0 \to C \to M  \overset{x}{\to} M \to [\rmH^{d-2}_{\m}(\overline{R})]^{\vee} \to 0
$$
where $C = (0):_M x$. Since  $[\rmH^{d-2}_{\m}(\overline{R})]^{\vee}$ has finite length, it follows that $\dim_RM \le 1$.

$(1)$ We first assume that $R$ is quasi-Gorenstein. If $\dim_R M = 0$, the ring $R$ has FLC, because $\ell_R(\rmH^{d-1}_\m(R))<\infty$. By \cite[(1.16)]{AG}, we get the isomorphisms 
$$
M = [\rmH^{d-1}_\m(R)]^{\vee} \cong \rmH^{d-(d-1)+1}_\m (\rmK_R) \cong \rmH^2_\m(R) = (0)
$$
which leads to a contradiction. Thus $\dim_RM = 1$, so  $x \in \m$ forms a system of parameters for $M$. Therefore, it suffices to prove that $C =(0)$. 
Assume, to the contrary, that $C \ne (0)$, and seek a contradiction. 
As $\ell_R(M/xM)<\infty$, for each $\p \in \Spec R \setminus\{\m\}$, we have the exact sequence
$$
0 \to C_\p \to M_\p \overset{x}{\to} M_\p \to 0
$$
of $R_\p$-modules. This implies that $C_\p = (0)$, and hence $\ell_R(C) < \infty$. Next, applying the depth lemma to the exact sequence
$$
0 \to \rmK_R/x\rmK_R \to \rmK_{\overline{R}}\cong \overline{R} \to C \to 0,
$$
we have $\depth \rmK_R = 2$. However, this contradicts the facts that $R \cong \rmK_R$ and $\depth R \ge 3$. 
Consequently $C =(0)$, as desired. 

Conversely, we assume $M$ is Cohen-Macaulay and of dimension one. Since $\ell_R(M/xM)<\infty$, note that $x \in \m$ is a non-zerodivisor on $M$. Thus $C=(0)$ and $\rmK_R/x\rmK_R \cong \overline{R}$. It is straightforward to check that the local ring $R$ is quasi-Gorenstein.

$(2)$ Let $\p \in \Supp_R M$. To show that $R_\p$ is not Cohen-Macaulay, we may assume $\p \ne \m$. Then $\dim_R M=1$ and $\p \in \Assh_R M$. In particular, $\Supp_RM \setminus\{\m\} \subseteq \Assh_RM$. Choose a Gorenstein complete local ring $S$ with $\dim S = d$ such that $R$ is a homomorphic image of $S$. Let $P = \p \cap S \in \Spec S$. Then $S/P \cong R/\p$, which implies  $\dim S_P = d-1$. As $S$ is Gorenstein and complete, we get the isomorphism
$$
M = [\rmH^{d-1}_\m(R)]^{\vee} \cong \Ext^1_S(R, S)
$$
which yields $M_\p \cong \Ext^1_{S_P}(R_{\p}, S_P)$. Hence
$$
(0) \ne \widehat{R_{\p}}\otimes_{R_\p}M_\p \cong \widehat{\,S_{P}}\otimes_{S_P}\Ext^1_{S_P}(R_{\p}, S_P) \cong [\rmH^{d-2}_{\p R_{\p}}(R_{\p})]^{\vee}.
$$ 
This shows $0 < \ell_{R_\p}(\rmH^{d-2}_{\p R_\p}(R_\p)) < \infty$, because $\p \in \Assh_RM$.  
Then $R_\p$ is not Cohen-Macaulay. Indeed, we assume the contrary, i.e., $R_\p$ is Cohen-Macaulay. As $\depth R_\p = d-2 \ge 2$, we can choose a non-zerodivisor $\alpha \in \p R_\p$ on $R_\p$. The sequence $0 \to R_\p \to R_\p \to R_\p/\alpha R_\p \to 0$ induces the exact sequence
$$
0 \to \rmH^{d-3}_{\p R_\p}(R_\p/\alpha R_\p) \to \rmH^{d-2}_{\p R_\p}(R_\p) \overset{\alpha}{\to} \rmH^{d-2}_{\p R_\p}(R_\p) \to  0
$$
of $R_\p$-modules. Since $\rmH^{d-2}_{\p R_\p}(R_\p)$ is finitely generated, it follows that $\rmH^{d-3}_{\p R_\p}(R_\p/\alpha R_\p)=(0)$. This contradicts the fact that $R_\p/\alpha R_\p$ is Cohen-Macaulay and of dimension $d-3$. Therefore, $R_\p$ cannot be Cohen-Macaulay, as required.

On the other hand, pick $\p \in \Spec R$ such that $R_\p$ is not Cohen-Macaulay. Let $S$ be  a Gorenstein complete local ring with $\dim S = d$ such that $R$ is a homomorphic image of $S$. Setting $P = \p \cap S$, $n=\dim S_P$, and $t=\depth R_\p$, we have
$$
n=\dim S_P \ge \dim R_\p > t=\depth R_\p,
$$
which implies $\Ext^{n-t}_S(R, S) \ne (0)$. If $n-t \ge 2$, then 
$$
\Ext^{n-t}_S(R, S) \cong [\rmH^{d-(n-t)}_\m(R)]^{\vee}=(0),
$$
a contradiction. Hence,  $n-t = 1$, and we conclude that 
$$
M_\p \cong \Ext^1_{S_P}(R_\p, S_P) \ne (0).
$$
Thus, $\Supp_RM$ is precisely the non-Cohen-Macaulay locus of $R$. 

$(3)$ Suppose $\dim_RM = 1$. Since $M/xM \cong [\rmH^{d-2}_\m(\overline{R})]^{\vee}$, we have $\ell_R(M/xM) = \ell_R(\rmH^{d-2}_\m(\overline{R}))$. First, assume that $M$ is Cohen-Macaulay. Because $x \in \m$ is a system of parameters for $M$, we get $\rme_0(x, M) = \ell_R(M/xM)$. Using the associativity formula for multiplicity, we derive the equalities
$$
\rme_0(x, M) = \sum_{\p \in \Assh_RM}\ell_{R_{\p}}(M_{\p}) \cdot \rme_0(x, R/\p) = \sum_{\p \in \Assh_RM}\ell_{R_{\p}}(\rmH^{d-2}_{\p R_{\p}}(R_\p)) \cdot \rme_0(x, R/\p)  
$$
where the second equality follows from the fact that
$$
\ell_{R_{\p}}(M_{\p}) = \ell_{R_\p}(\widehat{R_\p}\otimes_{R_\p}M_\p) = \ell_{R_{\p}}[(\rmH^{d-2}_{\p R_{\p}}(R_\p)]^{\vee})=\ell_{R_{\p}}(\rmH^{d-2}_{\p R_{\p}}(R_\p))
$$ 
for all $\p \in \Assh_RM$. This establishes the desired equality. Conversely, assume that
$$
\ell_R(\rmH^{d-2}_{\m}(R/xR)) = \sum_{\p \in \Assh_R M}\ell_{R_{\p}}(\rmH^{d-2}_{\p R_{\p}}(R_{\p}))\cdot \rme_0(x, R/\p).
$$
Since $\ell_{R_{\p}}(M_{\p}) = \ell_{R_{\p}}(\rmH^{d-2}_{\p R_{\p}}(R_{\p}))$, it follows that $\ell_R(M/xM) = \rme_0(x, M)$. Hence, $M$ is Cohen-Macaulay. 
\end{proof}

\begin{rem}
Under the assumption of Theorem \ref{3.1}, the condition that $\rmH^{d-2}_{\m}(R/xR)$ is finitely generated as an $R$-module is equivalent to stating that the ring $R/xR$ has FLC. The latter condition holds if either $R/xR$ is locally Cohen-Macaulay on the punctured spectrum, or $\dim R=4$.
\end{rem}


\begin{cor}
Let $(R, \m)$ be a Noetherian local ring with $\dim R = 4$ admitting the canonical module $\rmK_R$. 
Suppose that $R$ is not Cohen-Macaulay and there exists a non-zerodivisor $x \in \m$ on $R$ such that $R/xR$ is quasi-Gorenstein. Then the following assertions hold true, where $M$ denotes the Matlis dual of $\rmH^{3}_{\m}(R)$. 
\begin{enumerate}
\item[$(1)$] $R$ is quasi-Gorenstein if and only if $M$ is a Cohen-Macaulay $R$-module with $\dim_RM = 1$.
\item[$(2)$] 
$\Supp_R M$ is the non-Cohen-Macaulay locus of $R$.
\item[$(3)$] If $\dim_R M = 1$, then $M$ is a Cohen-Macaulay $R$-module if and only if the equality
$$
\ell_R(\rmH^{2}_{\m}(R/xR)) = \sum_{\p \in \Assh_R M}\ell_{R_{\p}}(\rmH^{2}_{\p R_{\p}}(R_{\p}))\cdot \rme_0(x, R/\p)
$$
holds.
\end{enumerate}
\end{cor}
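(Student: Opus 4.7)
The plan is to obtain the corollary as a direct specialization of Theorem \ref{3.1} with $d=4$. What needs checking are the two auxiliary hypotheses of that theorem which are not stated in the corollary, namely (i) $\depth R = d-1 = 3$, and (ii) finite generation of $\rmH^{d-2}_\m(R/xR) = \rmH^{2}_\m(R/xR)$ as an $R$-module. Once both are established, conclusions (1), (2), (3) follow by substituting $d=4$ into the corresponding assertions of Theorem \ref{3.1}.

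First I would verify the depth condition. Because $R/xR$ is quasi-Gorenstein, it is isomorphic to its own canonical module, and any canonical module satisfies Serre's $(S_2)$-condition (as recalled in Subsection 2.1). Hence $\depth R/xR \ge \min\{2, \dim R/xR\} = 2$, and since $x$ is $R$-regular, $\depth R \ge 3$. The assumption that $R$ is not Cohen-Macaulay then forces $\depth R = 3$.

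Next I would verify the finite generation of $\rmH^{2}_\m(R/xR)$. Write $\overline{R} = R/xR$, so $\dim \overline{R} = 3$. For any prime $\p$ of $\overline{R}$ distinct from its maximal ideal one has $\dim \overline{R}_\p \le 2$, and $(S_2)$ then yields $\depth \overline{R}_\p \ge \min\{2,\dim \overline{R}_\p\} = \dim \overline{R}_\p$, making $\overline{R}_\p$ Cohen-Macaulay. Thus $\overline{R}$ is Cohen-Macaulay on the punctured spectrum, which by the remark immediately following Theorem \ref{3.1} implies that $\overline{R}$ has FLC; in particular $\rmH^{2}_\m(\overline{R})$ is of finite length and so finitely generated.

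I do not expect a genuine obstacle: the entire content of the corollary, beyond what is already proved in Theorem \ref{3.1}, is the observation that in the borderline case $\dim R = 4$ the quasi-Gorenstein hypothesis on $R/xR$ makes both the depth condition and the FLC condition automatic. With (i) and (ii) in hand, Theorem \ref{3.1} applies directly with $d=4$ and yields assertions (1)--(3) verbatim.
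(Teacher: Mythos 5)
Your overall strategy --- reduce to Theorem \ref{3.1} with $d=4$ by checking $\depth R = 3$ and the finite generation of $\rmH^2_\m(R/xR)$ --- is exactly the paper's, and your verification of the depth condition via the $(S_2)$-property of $\rmK_{R/xR} \cong R/xR$ is fine. The one step that is not fully justified is the passage from ``$\overline{R}=R/xR$ is Cohen-Macaulay on the punctured spectrum'' to ``$\overline{R}$ has FLC.'' The criterion recalled in Subsection 2.2 requires, in addition to local Cohen-Macaulayness away from the maximal ideal, the dimension equality $\dim \overline{R}_P + \dim \overline{R}/P = \dim \overline{R}$ for every $P$ in the punctured support, together with $\overline{R}$ being a homomorphic image of a Cohen-Macaulay ring; local Cohen-Macaulayness alone does not imply FLC for non-equidimensional rings. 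The Remark you invoke asserts the implication without proof, and its ``$\dim R = 4$'' clause is precisely what this corollary's proof is meant to establish, so leaning on it here is essentially circular.

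The gap is small and is filled exactly as the paper does it: first pass to the completion, so that $\overline{R}$ is a homomorphic image of a Cohen-Macaulay ring; then use that $\overline{R}$ is quasi-Gorenstein, so $(\rmK_{\overline{R}})_P \cong \overline{R}_P \neq (0)$ for every $P \in \Spec \overline{R}$, whence $P \in \Supp_{\overline{R}} \rmK_{\overline{R}}$ and Aoyama's description of the support of the canonical module (Subsection 2.1) yields $\dim \overline{R}_P + \dim \overline{R}/P = 3$. Combined with the local Cohen-Macaulayness you already derived from $(S_2)$, the converse direction of the FLC criterion applies and $\rmH^2_\m(\overline{R})$ has finite length. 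With this one insertion your argument coincides with the paper's proof.
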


\begin{proof}
We may assume that $R$ is complete. Set $\overline{R} = R/xR$. Note that $\depth R = 3$. For every $P \in \Spec \overline{R} \setminus \{\overline{\m}\}$, we have $\dim \overline{R}_P \le 2$; hence $\overline{R}_P$ is Cohen-Macaulay because $\overline{R}$ satisfies Serre's $(S_2)$ condition.  
Since $(\rmK_{\overline{R}})_P \cong \overline{R}_P \ne (0)$, it follows that $P \in \Supp_{\overline{R}}\rmK_{\overline{R}}$. Thus, $3 = \dim \overline{R}_P + \dim \overline{R}/P$. As $\overline{R}$ is a homomorphic image of a Cohen-Macaulay ring, $\overline{R}$ has FLC. Therefore, $\rmH^2_{\m}(\overline{R})$ is finitely generated as an $R$-module. Hence, the assertions follow from Theorem \ref{3.1}.
\end{proof}


\if0
\begin{prop}
Suppose that $d=4$ and $R$ is not a Cohen--Macaulay local ring. Then the following assertions hold.
\begin{enumerate}
\item[$(1)$] $\Supp_R W$ is the non Cohen--Macaulay locus of $R$, namely, the set of prime ideals $\p$ of $R$ for which the local ring $R_\p$ is not Cohen--Macaulay.
\item[$(2)$] If $\dim_R W = 1$, then $W$ is a Cohen--Macaulay $R$-module if and only if the equality
$$
\ell_R(\rmH^2_{\m}(R)) = \sum_{\p \in \Assh_R W}\ell_{R_{\p}}(\rmH^2_{\p R_{\p}}(R_{\p}))\cdot \rme(x, R/\p)
$$
holds.
\end{enumerate}
\end{prop}
\fi

\begin{cor}
Let $(R, \m)$ be a Noetherian local ring with $d = \dim R >0$ admitting the canonical module $\rmK_R$. Suppose that $R$ has FLC and there exists a non-zerodivisor $x \in \m$ on $R$ such that $R/xR$ is quasi-Gorenstein. Then $R$ is quasi-Gorenstein if and only if $\rmH^{d-1}_\m(R) = (0)$. In particular, $R$ is quasi-Gorenstein if and only if $R$ is Gorenstein, provided $d \le 4$. 
\end{cor}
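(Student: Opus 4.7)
The plan is to reduce to the substantive case $d \ge 4$ with $R$ not Cohen-Macaulay by disposing of the remaining situations directly. For $d \le 3$, the remark at the start of Section 3 forces $R$ to be Gorenstein (since $R/xR$ is quasi-Gorenstein for a non-zerodivisor $x \in \m$), so both directions of the equivalence are trivial and the ``in particular'' clause is immediate. For $d \ge 4$ with $R$ Cohen-Macaulay, $R/xR$ is Cohen-Macaulay and quasi-Gorenstein, hence Gorenstein; since Gorensteinness deforms, $R$ is Gorenstein, and again the equivalence holds vacuously. In the remaining case $d \ge 4$ with $R$ not Cohen-Macaulay, the quasi-Gorenstein ring $R/xR$ satisfies Serre's $(S_2)$ condition and has dimension $d - 1 \ge 3$, giving $\depth R/xR \ge 2$ and hence $\depth R \ge 3$. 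Moreover, FLC makes $M := [\rmH^{d-1}_{\m}(R)]^{\vee}$ a module of finite length.

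For the forward direction, assume $R$ is quasi-Gorenstein, so $\rmK_R \cong R$. The Auslander-Goto duality for FLC modules (the identity $M \cong \rmH^2_{\m}(\rmK_R)$ invoked in the proof of Theorem \ref{3.1}) then yields $M \cong \rmH^2_{\m}(R) = 0$, the vanishing following from $\depth R \ge 3$. Hence $\rmH^{d-1}_{\m}(R) = 0$.

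For the backward direction, assume $\rmH^{d-1}_{\m}(R) = 0$. Applying $\rmH^{\bullet}_{\m}(-)$ to $0 \to R \overset{x}{\to} R \to R/xR \to 0$ and using $\rmH^d_{\m}(R/xR) = 0$, the vanishing of $\rmH^{d-1}_{\m}(R)$ extracts
$$0 \to \rmH^{d-1}_{\m}(R/xR) \to \rmH^d_{\m}(R) \overset{x}{\to} \rmH^d_{\m}(R) \to 0.$$
Its Matlis dual (formed over $\widehat{R}$ and descended to $R$ by faithful flatness) reads $0 \to \rmK_R \overset{x}{\to} \rmK_R \to \rmK_{R/xR} \to 0$, so $x$ is $\rmK_R$-regular and $\rmK_R/x\rmK_R \cong \rmK_{R/xR} \cong R/xR$, the last isomorphism by the quasi-Gorensteinness of $R/xR$. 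Consequently $\rmK_R/\m\rmK_R \cong R/\m$, so $\rmK_R$ is cyclic; writing $\rmK_R \cong R/I$, comparing annihilators in $R/(I + xR) \cong R/xR$ forces $I \subseteq xR$, and $\rmK_R$-regularity of $x$ then yields $I \subseteq xI$, whence $I = 0$ by Nakayama. Thus $\rmK_R \cong R$, i.e., $R$ is quasi-Gorenstein.

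The ``in particular'' clause for $d \le 4$ follows at once: the cases $d \le 3$ are covered above, and for $d = 4$ a quasi-Gorenstein $R$ has $\rmH^3_{\m}(R) = 0$ by the main equivalence, while $\rmH^i_{\m}(R) = 0$ for $i \le 2$ from $\depth R \ge 3$; hence $R$ is Cohen-Macaulay, and Cohen-Macaulay combined with quasi-Gorenstein yields Gorenstein. The main technical hurdle I anticipate is the clean invocation of the Auslander-Goto formula $M \cong \rmH^2_{\m}(\rmK_R)$ in the forward direction, since this is precisely where the FLC hypothesis is consumed; one must also take care to descend the Matlis-dual exact sequence in the backward direction from $\widehat{R}$ to $R$.
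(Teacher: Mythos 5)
Your proof is correct, and it is worth recording where it diverges from the paper. The reduction to $d\ge 4$ with $R$ non-Cohen--Macaulay and the backward implication follow the paper's route: the paper likewise extracts $0 \to \rmH^{d-1}_{\m}(R/xR) \to \rmH^{d}_{\m}(R) \xrightarrow{x} \rmH^{d}_{\m}(R) \to 0$ and dualizes to $0 \to \rmK_R \xrightarrow{x} \rmK_R \to \overline{R} \to 0$, though it then simply asserts $R \cong \rmK_R$; your cyclicity-plus-Nakayama argument (annihilator comparison giving $I \subseteq xR$, then $I \subseteq xI$ by $\rmK_R$-regularity of $x$) supplies the step the paper leaves implicit, and your remark about completing and descending is the right caveat. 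The forward implication is where you genuinely differ: the paper assumes $\rmH^{d-1}_{\m}(R) \ne (0)$, notes $M$ has finite length by FLC so that $C = (0):_M x \ne (0)$, and derives a contradiction from the exact sequence $0 \to \overline{R} \to \overline{R} \to C \to 0$ via the depth lemma against $\depth \overline{R} \ge 2$; you instead invoke the Aoyama--Goto duality $[\rmH^{d-1}_{\m}(R)]^{\vee} \cong \rmH^{2}_{\m}(\rmK_R) \cong \rmH^{2}_{\m}(R) = (0)$, which is exactly the mechanism the paper uses inside the proof of Theorem \ref{3.1}(1) for the case $\dim_R M = 0$. Both consume the same hypotheses (FLC and $\depth R \ge 3$); yours is shorter but leans on the external duality formula, while the paper's is self-contained modulo the depth lemma. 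Either way the argument is sound.
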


\begin{proof}
Let $\overline{R} = R/xR$. For $d \geq 3$, the ring $R$ is Gorenstein, so we may assume $d \geq 4$. Hence, $\depth R \ge 3$. Suppose that $\rmH^{d-1}_\m(R) = (0)$. Applying the functor $\rmH^{i}_\m(-)$ to the exact sequence $0 \to \rmK_R \overset{x}{\to} \rmK_R \to \overline{R} \to 0$, we obtain the sequence $0 \to \rmH^{d-1}_\m(\overline{R}) \to \rmH^{d}_\m(R)  \overset{x}{\to} \rmH^{d}_\m(R)  \to 0$. This implies the exact sequence
$$
0 \to \rmK_R \overset{x}{\to} \rmK_R \to \overline{R} \to 0 
$$
of $R$-modules. Therefore, we have $R \cong \rmK_R$. Conversely, assume $R$ is quasi-Gorenstein. Then we have the exact sequence
$$
0 \to \overline{R} \to \overline{R} \to C \to 0
$$ 
where $M$ denotes the Matlis dual of $\rmH^{d-1}_\m(R)$ and $C=(0):_M x$. Suppose, for the sake of contradiction, that $\rmH^{d-1}_\m(R) \ne (0)$. Then $M \ne (0)$, and $\ell_R(M) = \ell_R(\rmH^{d-1}_\m(R))$ is finite because $R$ has FLC. Thus, $\dim_RM=0$. Consequently, $C \ne (0)$, which implies $\dim_RC = 0$ and $\depth R = 2$. This is a contradiction. Hence, $\rmH^{d-1}_\m(R) = 0$, as required.
Finally, assume $d = 4$ and $R$ is quasi-Gorenstein. Since $\rmH^{3}_\m(R) = 0$ and $\depth R \geq 3$, it follows that $\depth R \geq 4$. Hence, $R$ is Cohen-Macaulay, which means $R$ is Gorenstein. 
\end{proof}

To conclude this section, we articulate the criteria under which a quasi-Gorenstein ring possessing the FLC property is guaranteed to be Gorenstein.

\begin{rem}
Let $(R, \m)$ be a Noetherian local ring with $d = \dim R \ge 2$ that admits the canonical module $\rmK_R$. Suppose $R$ is quasi-Gorenstein having FLC. Then $R$ is Gorenstein if and only if $\depth R \ge \frac{d}{2} + 1$. 
\end{rem}

\begin{proof}
We may first assume that $R$ is complete. Suppose $R$ is Gorenstein. Since $d \ge 2$, we have $d \ge \frac{d}{2} + 1$; hence, $\depth R \ge \frac{d}{2} + 1$. Conversely, when $d = 2$, the ring $R$ is Cohen-Macaulay because $\depth R \ge \frac{d}{2} + 1 = 2$. This implies that $R$ is Gorenstein. Therefore, we may assume $d \ge 3$. Since $R$ is complete and has FLC,  we have the isomorphisms
$$
\rmH^i_\m(R) \cong [\rmH^{d-i+1}_\m(\rmK_R)]^{\vee} \cong [\rmH^{d-i+1}_\m(R)]^{\vee}
$$
for every $2 \le i \le d-1$. Set $t =\depth R$. Then $t \ge \frac{d}{2} + 1$, which implies that $t \ge 3$. Assume $R$ is not Cohen-Macaulay. Then $t \le d-1$. This leads us to $(0) \ne \rmH^t_{\m}(R) \cong [\rmH^{d-t+1}_\m(R)]^{\vee}$. Thus, $\rmH^{d-t+1}_\m(R) \ne (0)$. Consequently, $t \le d-t+1$. This yields $\frac{d}{2} + 1 \le t \le \frac{d+1}{2}$, which is a contradiction. Therefore, $R$ must be Cohen-Macaulay, and hence $R$ is Gorenstein.
\end{proof}


\section{Proof of main theorem}

First, we establish the notation and assumptions that form the basis of all the results in this section.
\begin{setup}
Let $(R, \m)$ be a Noetherian local ring, and let $\calF=\{F_n\}_{n \in \Bbb Z}$ denote a Noetherian filtration of ideals of $R$ with $F_1 \ne R$. Let $t$ be an indeterminate over $R$. 
The {\it extended Rees algebra of $\calF$} and the {\it associated graded ring of $\calF$} are defined as
$$
\calR'(\calF) = \sum_{n \in \Bbb Z}F_nt^n \subseteq R[t, t^{-1}], \ \ G(\calF) = \calR'(\calF)/t^{-1}\calR'(\calF) \cong \bigoplus_{n \ge 0}F_n/F_{n+1},
$$
respectively. Let $\fkM$ and $\fkN$ denote the unique graded maximal ideals of $\calR'(\calF)$ and $G(\calF)$, respectively. For each $\p \in \Spec R$, we define
$
\p' = \p \cdot R[t, t^{-1}]\cap \calR'(\calF).
$
\end{setup}

Note that $\p'$ is a graded prime ideal of $\calR'(\calF)$ with $t^{-1} \not\in \p'$ and $\p' \subsetneq \fkM$. There exists a one-to-one correspondence between $\Spec R$ and ${}^*D(t^{-1})$, the set of all graded prime ideals $P$ of $\calR'(\calF)$ such that $t^{-1} \not\in P$, where $\p \in \Spec R$ corresponds to $\p' \in {}^*D(t^{-1})$. Furthermore, we have the isomorphisms
$$
\calR'(\calF)_{\p'} \cong R[t, t^{-1}]_{\p R[t, t^{-1}]} \cong R[t]_{\p R[t]} \ \ \ \text{and} \ \ \ \calR'(\calF)/\m' \cong (R/\m)[t^{-1}].
$$


\begin{lem}\label{4.2}
Suppose the following conditions hold.
\begin{enumerate}
\item[$(1)$] $R$ is not a Cohen-Macaulay ring but possesses FLC.
\item[$(2)$] $F_1$ is an $\m$-primary ideal of $R$.
\item[$(3)$] $G(\calF)_Q$ is Cohen-Macaulay for every graded prime ideal $Q$ of $G(\calF)$ with $Q \ne \fkN$.
\end{enumerate}
Then, for every $P \in \Spec \calR'(\calF)$ such that $P$ is graded and $P \ne \fkM$, the local ring $\calR'(\calF)_P$ is not Cohen-Macaulay if and only if $P = \m'$. In particular, $V(\m')$ coincides with the non-Cohen-Macaulay locus of $\calR'(\calF)$.
\end{lem}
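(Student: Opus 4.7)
My plan is to first establish the assertion for graded primes $P\ne\fkM$ by splitting on whether $t^{-1}\in P$, and then lift the resulting description to arbitrary primes via the graded-core construction.

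For the graded case, fix a graded prime $P\ne\fkM$ of $\calR'(\calF)$. If $t^{-1}\notin P$, the correspondence recorded just after the Setup yields $\p\in\Spec R$ with $P=\p'$, together with an isomorphism $\calR'(\calF)_{\p'}\cong R[t]_{\p R[t]}$. The map $R_\p\to R_\p[t]_{\p R_\p[t]}$ is a flat local homomorphism whose closed fibre is the field $\kappa(\p)(t)$, so $\calR'(\calF)_{\p'}$ is Cohen-Macaulay if and only if $R_\p$ is. By hypothesis (1), together with the description of FLC modules of positive dimension recalled in Section~2, the non-Cohen-Macaulay locus of $R$ equals $\{\m\}$; hence, among such $P$, only $P=\m'$ fails to be Cohen-Macaulay. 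If instead $t^{-1}\in P$, set $Q=P/t^{-1}\calR'(\calF)$, which is a graded prime of $G(\calF)$ distinct from $\fkN$. Since $\calR'(\calF)\subseteq R[t,t^{-1}]$, the element $t^{-1}$ is a non-zerodivisor on $\calR'(\calF)$ and hence also on $\calR'(\calF)_P$; combining this with the identification $\calR'(\calF)_P/t^{-1}\calR'(\calF)_P\cong G(\calF)_Q$ and the standard fact that killing a non-zerodivisor in the maximal ideal both preserves and reflects Cohen-Macaulayness of a Noetherian local ring, Cohen-Macaulayness of $\calR'(\calF)_P$ reduces to that of $G(\calF)_Q$, which holds by hypothesis (3). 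This proves the first assertion.

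For the statement that $V(\m')$ coincides with the non-Cohen-Macaulay locus, note first that $\calR'(\calF)_{\m'}\cong R[t]_{\m R[t]}$ is not Cohen-Macaulay (as $R$ is not). For any $P\supseteq\m'$, the ring $\calR'(\calF)_{\m'}$ is a further localization of $\calR'(\calF)_P$; since Cohen-Macaulayness is preserved under localization, $\calR'(\calF)_P$ cannot be Cohen-Macaulay either, giving the inclusion $V(\m')\subseteq$ non-Cohen-Macaulay locus. Conversely, let $P\in\Spec\calR'(\calF)$ with $P\not\supseteq\m'$, and let $P^\ast$ denote the largest graded ideal of $\calR'(\calF)$ contained in $P$; this is prime by standard theory of $\mathbb{Z}$-graded rings. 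Because $\m'$ itself is graded, $P\not\supseteq\m'$ forces $P^\ast\not\supseteq\m'$; in particular $P^\ast\ne\m'$ and $P^\ast\ne\fkM$ (otherwise $P\supseteq P^\ast=\fkM\supseteq\m'$, a contradiction). The first assertion then gives that $\calR'(\calF)_{P^\ast}$ is Cohen-Macaulay. Invoking the classical fact that for a $\mathbb{Z}$-graded Noetherian ring $A$ and any $P\in\Spec A$ one has $A_P$ Cohen-Macaulay if and only if $A_{P^\ast}$ is, we conclude that $\calR'(\calF)_P$ is Cohen-Macaulay, as required.

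I expect the main obstacle to be the final graded-to-ungraded transfer, which is standard in the theory of $\mathbb{Z}$-graded rings but is not spelled out in the paper's preliminaries. The remaining steps are routine applications of the correspondence stated after the Setup, the FLC description of the non-Cohen-Macaulay locus of $R$, and the behaviour of Cohen-Macaulayness under passing to a quotient by a non-zerodivisor.
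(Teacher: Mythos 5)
Your proof is correct and follows essentially the same route as the paper: the case split on whether $t^{-1}\in P$, the identification $\calR'(\calF)_{\p'}\cong R[t]_{\p R[t]}$ combined with the FLC hypothesis, the reduction modulo the non-zerodivisor $t^{-1}$ to invoke hypothesis (3), and the passage to $P^\ast$ for the final assertion. The only cosmetic differences are that you argue directly rather than by contradiction and that you make explicit the standard fact that $\calR'(\calF)_P$ is Cohen-Macaulay if and only if $\calR'(\calF)_{P^\ast}$ is, which the paper uses implicitly.
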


\begin{proof}
Let $P$ be a graded prime ideal of $\calR'(\calF)$ with $P \ne \fkM$. Suppose that $\calR'(\calF)_P$ is not Cohen-Macaulay. If $t^{-1} \in P$, then $P$ contains $F_n = F_nt^n \cdot t^{-n}$ for all $n > 0$, which implies $F_{\ell}t^{\ell} \nsubseteq P$ for some $\ell > 0$. By our assumption, $G(I)_Q$ is Cohen-Macaulay, where $Q = P \cdot G(I) \subsetneq \fkN$. Hence, $\calR'(I)_P$ must be a Cohen-Macaulay local ring, which leads to a contradiction. Therefore, $t^{-1} \notin P$. Thus, $P \in {}^*D(t^{-1})$, and there exists $\p \in \Spec R$ such that $P = \p'$.
This shows that
$$
\calR'(\calF)_P \cong \calR'(\calF)_{\p'} \cong R[t, t^{-1}]_{\p R[t, t^{-1}]} \cong R[t]_{\p R[t]}
$$
is not Cohen-Macaulay. Consequently,  $R_\p$ is not Cohen-Macaulay either. Since $R$ has FLC, it follows that $\p =\m$. Hence, we conclude that  $P = \m'$. Conversely, if we assume $P=\m'$, then the isomorphism
$
\calR'(\calF)_P \cong R[t]_{\m R[t]}
$
guarantees that $\calR'(\calF)_P$ is not Cohen-Macaulay. 
Let us verify the last assertion. For each $P \in V(\m')$, there exists an isomorphism $(\calR'(\calF)_P)_{\m'\calR'(\calF)_P} \cong \calR'(\calF)_\m'$. In particular, $\calR'(\calF)_P$ is not Cohen-Macaulay. Conversely, suppose $P \in \Spec \calR'(\calF)$ such that $\calR'(\calF)_P$ is not Cohen-Macaulay. Let $P^*$ denote the ideal of $\calR'(\calF)$ generated by the homogeneous elements of $P$. 
Observe that $P^* \subseteq \fkM$. If $P^*=\fkM$, then $P=\fkM$. Suppose $P^* \ne \fkM$. Then $P^* = \m'$, because $\calR'(\calF)_{P^*}$ is not Cohen-Macaulay. In either case, we have $P \subseteq \m'$, and thus $P \in V(\m')$.
\end{proof}

We now present the main result of this paper. 

\begin{thm}
Let $(R, \m)$ be a Noetherian local ring with $d=\dim R \ge 3$ which is a homomorphic image of a Gorenstein ring. Let  $\calF=\{F_n\}_{n \in \Bbb Z}$ be the Hilbert filtration of ideals in $R$. Suppose that $G(\calF)$ is a quasi-Gorenstein graded ring, $\depth \calR'(\calF) \ge d$, and $\rmH^{d-1}_{\fkM}(G(\calF))$ is finitely generated as an $\calR'(\calF)$-module. Then the following conditions are equivalent.
\begin{enumerate}
\item[$(1)$] $\calR'(\calF)$ is a quasi-Gorenstein graded ring.
\item[$(2)$] The equality $\ell_R(\rmH^{d-1}_{\m}(R)) = \ell_{G(\calF)}(\rmH^{d-1}_{\fkM}(G(\calF)))$ holds. 
\end{enumerate}
\end{thm}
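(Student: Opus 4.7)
Our plan is to apply Theorem \ref{3.1} to the local ring $A = \calR'(\calF)_\fkM$ with the non-zerodivisor $x = t^{-1}$, exploiting the identification $A/t^{-1}A \cong G(\calF)_\fkN$. First, I would dispose of the Cohen-Macaulay case: if $\calR'(\calF)$ is CM, then so is $G(\calF)$, hence Gorenstein (being also quasi-Gorenstein), and deformation of Gorensteinness in the CM case yields that $\calR'(\calF)$ is Gorenstein, hence quasi-Gorenstein. Meanwhile $R$ is CM because $\calR'(\calF)_{\m'} \cong R[t]_{\m R[t]}$ is CM and faithfully flat over $R$. Therefore $\rmH^{d-1}_\m(R) = 0 = \rmH^{d-1}_\fkM(G(\calF))$ and condition (2) holds trivially.

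Henceforth assume $\depth \calR'(\calF) = d$. Set $M := [\rmH^d_\fkM(\calR'(\calF))]^{\vee}$. The hypotheses of Theorem \ref{3.1} (with its ``$d$'' taken as our $d+1 \ge 4$) are satisfied for $A$ and $t^{-1}$. The long exact sequence of local cohomology for $0 \to \calR'(\calF) \xrightarrow{t^{-1}} \calR'(\calF) \to G(\calF) \to 0$, together with $\rmH^i_\fkM(\calR'(\calF)) = 0$ for $i < d$, gives $\rmH^i_\fkM(G(\calF)) = 0$ for $i < d-1$, and combining with the hypothesized finite generation (hence finite length, by graded Artinianness) of $\rmH^{d-1}_\fkM(G(\calF))$ shows that $G(\calF)$ has FLC; in particular $G(\calF)_Q$ is CM for every graded prime $Q \ne \fkN$.

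By Theorem \ref{3.1}(1)--(2), $\calR'(\calF)$ is quasi-Gorenstein iff $M$ is CM of dimension one, and $\Supp_A M$ equals the non-CM locus of $A$. The key claim is $\Assh_A M \subseteq \{\m'\}$. Indeed, $\dim_A M \le 1$ by the proof of Theorem \ref{3.1} (since $M/t^{-1}M$ has finite length, Krull's principal ideal theorem applies), so any $\p \in \Ass M$ with $\p \ne \fkM$ satisfies $\dim A/\p = 1$. If such $\p$ is graded, then $t^{-1} \notin \p$ (else $A_\p/t^{-1}A_\p \cong G(\calF)_Q$ with $Q \ne \fkN$ would be CM, forcing $A_\p$ CM and contradicting $\p \in$ non-CM locus), so $\p = \p_0'$ for some $\p_0 \in \Spec R$, and $\dim A/\p = \dim R/\p_0 + 1 = 1$ forces $\p_0 = \m$, i.e., $\p = \m'$; if $\p$ is non-graded, then $\dim A/\p^* = 2$ and $\p^* \in \Supp M$ (as $A_\p$ is CM iff $A_{\p^*}$ is CM for graded rings), contradicting $\dim M \le 1$. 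Since $\calR'(\calF)_{\m'} \cong R[t]_{\m R[t]}$ is not CM iff $R$ is not CM, we conclude: if $R$ is CM then $\dim M = 0$, $\calR'(\calF)$ is not quasi-Gorenstein by Theorem \ref{3.1}(1), and condition (2) fails since $\rmH^{d-1}_\m(R) = 0$ while $\rmH^{d-1}_\fkM(G(\calF)) \ne 0$ (as $\depth G(\calF) = d-1$); if $R$ is not CM, then $\m' \in \Supp M$, so $\dim M = 1$ and $\Assh_A M = \{\m'\}$.

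When $R$ is not CM, Theorem \ref{3.1}(3) gives that $M$ is CM iff $\ell_A(\rmH^{d-1}_\fkM(G(\calF))) = \ell_{A_{\m'}}(\rmH^{d-1}_{\m' A_{\m'}}(A_{\m'})) \cdot \rme_0(t^{-1}, A/\m')$. I would then simplify each factor: $A/\m' \cong [(R/\m)[t^{-1}]]_{(t^{-1})}$ is a DVR with uniformizer $t^{-1}$, so $\rme_0(t^{-1}, A/\m') = 1$; the faithfully flat extension $R \to A_{\m'} \cong R[t]_{\m R[t]}$ has closed fiber $k(t)$ and hence preserves lengths of finite-length modules, giving $\ell_{A_{\m'}}(\rmH^{d-1}_{\m' A_{\m'}}(A_{\m'})) = \ell_R(\rmH^{d-1}_\m(R))$; finally $\ell_A(\rmH^{d-1}_\fkM(G(\calF))) = \ell_{G(\calF)}(\rmH^{d-1}_\fkM(G(\calF)))$ because both rings share the residue field $R/\m$. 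Substituting, the equation of Theorem \ref{3.1}(3) becomes exactly condition (2), and together with Theorem \ref{3.1}(1) this yields the desired equivalence. The main obstacle is the identification $\Assh_A M = \{\m'\}$, which is handled above via the graded-prime dichotomy combined with the general bound $\dim_A M \le 1$.
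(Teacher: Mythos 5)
Your proposal is correct and follows essentially the same route as the paper's proof: localize at $\fkM$, apply Theorem \ref{3.1} to $A=\calR'(\calF)_{\fkM}$ with $x=t^{-1}$, identify $\Assh_A M=\{\m' A\}$ through the graded/non-graded prime analysis (the content of Lemma \ref{4.2}), and reduce the multiplicity formula of Theorem \ref{3.1}(3) to condition (2) using $\rme_0(t^{-1},A/\m' A)=1$ and flat base change. The only difference is organizational: you run one unified case analysis on the Cohen-Macaulayness of $\calR'(\calF)$ and $R$, whereas the paper proves the two implications separately.
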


\begin{proof}
Set $\calR' = \calR'(\calF)$ and $G=G(\calF)$. The ring $\calR'$ admits the graded canonical module $\rmK_{\calR'}$. 

$(2) \Rightarrow (1)$ We may assume that $R$ is not Cohen-Macaulay. Indeed, if $R$ were Cohen-Macaulay, then $G$ would also be Cohen-Macaulay due to the equality $\ell_R(\rmH^{d-1}_{\m}(R)) = \ell_{G}(\rmH^{d-1}_{\fkM}(G))$ and the fact that $\depth G \ge d-1$. Consequently, $G$ would be Gorenstein, which in turn implies that $\calR'$ is Gorenstein as well. 
Now, assume for contradiction that $\calR'$ is Cohen-Macaulay. The Cohen-Macaulayness of $G$ would imply that the localization $R_\p$ is Cohen-Macaulay for every $\p \in V(F_1)$. However, this leads to a contradiction, as it conflicts with the facts that $F_1$ is $\m$-primary and $R$ is not Cohen-Macaulay. Therefore, $\calR'$ cannot be Cohen-Macaulay. We conclude, in particular, that $\depth \calR' = d$. 

We set $A = \calR'_{\fkM}$. Then $\dim A=d+1 \ge 4$ and $\depth A = d$. Note that $G_{\fkN} \cong G_{\fkM} \cong A/t^{-1}A$. 
Let $M$ denote the Matlis dual of $\rmH_{\fkM A}^{d}(A)$. Observe that $M \ne (0)$. Applying the functor $\rmH_{\fkM A}^i(-)$ to the exact sequence
$0 \to A \overset{t^{-1}}{\to} A \to G_{\fkM} \to 0$
of $A$-modules, we obtain the long exact sequence
$$
0 \to \rmH^{d-1}_{\fkM A}(G_\fkM) \to \rmH^d_{\fkM A}(A) \overset{t^{-1}}{\to} \rmH^d_{\fkM A}(A) \to \rmH^{d}_{\fkM A}(G_\fkM)  \to \rmH^{d+1}_{\fkM A}(A)\overset{t^{-1}}{\to}  \rmH^{d+1}_{\fkM A}(A) \to 0.
$$
Taking the Matlis dual of this sequence yields
$$
0 \to \rmK_{A} \overset{t^{-1}}{\to} \rmK_{A} \to \rmK_{G_{\fkM}} \to M \overset{t^{-1}}{\to} M \to [\rmH^{d-1}_{\fkM A}(G_{\fkM})]^{\vee} \to 0.
$$
Since $\ell_A(M/t^{-1}M) = \ell_A(\rmH^{d-1}_{\fkM A}(G_\fkM))$ is finite, we conclude that $\dim_A M \le 1$. If $\dim_A M=0$, the ring $A$ has FLC. Consequently, $A$ is locally Cohen-Macaulay on the punctured spectrum, and thus 
$$
A_{\m'A} \cong (\calR')_{\m'} \cong R[t, t^{-1}]_{\m R[t, t^{-1}]} \cong R[t]_{\m R[t]}
$$
is Cohen-Macaulay. By flat descent, this would imply that $R$ is Cohen-Macaulay, which contradicts our assumption. Therefore, we must have $\dim_AM=1$. 

Observe that $\depth G = d-1>0$. By Proposition \ref{depth}, it follows that $\depth R = d-1$. Since both $\rmH^{d-1}_\m(R)$ and $\rmH^{d-1}_\fkM(G)$ are finitely generated, the rings $R$ and $G_{\fkN}$ have FLC. We now claim that $\Assh_{A}M=\{\m' A\}$. 
To establish this, let $P \in \Assh_{A}M$. By Theorem \ref{3.1}, the local ring $A_P$ is not Cohen-Macaulay. Set $\p = P \cap \calR'$. Then $\p \in \Spec \calR'$ and $\p \subseteq \fkM$. Since $P \in \Assh_AM$ and $\dim_AM = 1$, we deduce that $\p \ne \fkM$. Note further that $A_P \cong \calR'_{\p}$. Let $\p^*$ denote the ideal of $\calR'$ generated by the homogeneous elements of $\p$. It follows that $\calR'_{\p^*}$ is not Cohen-Macaulay. By Lemma \ref{4.2}, we conclude that $\p^* = \m'$. Hence $\m' \subseteq \p \subsetneq \fkM$. Since $\dim \calR'/\m' = 1$, it must be that $\m' = \p$, and thus $P = \m' A$. Consequently, $\Assh_{A}M=\{\m' A\}$, as claimed.

By setting $R(t) = R[t]_{\m R[t]}$, we obtain
\begin{eqnarray*}
\sum_{P \in \Assh_AM}\ell_{A_P}(\rmH_{PA_P}^{d-1}(A_P))\cdot \rme_0(t^{-1}, A/P) 
\!\!\!\!&=&\!\!\!\! \ell_{A_{\m' A}}(\rmH_{{(\m' A)}A_{\m' A}}^{d-1}(A_{\m' A}))\cdot \rme_0(t^{-1}, A/{\m' A}) \\[-7pt]
\!\!\!\!&=&\!\!\!\!  \ell_{R(t)}(\rmH_{\m R(t)}^{d-1}(R(t))) =  \ell_{R}(\rmH_{\m}^{d-1}(R))\\ \!\!\!\!&=&\!\!\!\! \ell_{G}(\rmH^{d-1}_{\fkM}(G)) = \ell_A(\rmH_{\fkM A}^{d-1}(A/t^{-1}A)). 
\end{eqnarray*}
Here, the second equality follows from the fact that 
$$
A/\m' A \cong (\calR'/\m')_{\fkM} \cong ((R/\m)[t^{-1}])_{(t^{-1})},
$$
which is a regular local ring. By Theorem \ref{3.1}, we conclude that $M$ is Cohen-Macaulay. Consequently, the local ring $A=\calR'_{\fkM}$ is quasi-Gorenstein, and therefore $\calR'$ is also quasi-Gorenstein.

$(1) \Rightarrow (2)$ 
Suppose $\calR'$ is Cohen-Macaulay. Then both $G$ and $R$ are Cohen-Macaulay, and hence $\ell_R(\rmH^{d-1}_{\m}(R)) = 0 = \ell_{G}(\rmH^{d-1}_{\fkM}(G))$. Therefore, we may assume that $\calR'$ is not Cohen-Macaulay, in which case $\depth \calR' = d$.
Let $A = \calR'_{\fkM}$, and denote by $M$ the Matlis dual of $\rmH_{\fkM A}^{d}(A)$. Then $M \ne (0)$. Since $A$ is quasi-Gorenstein, Theorem \ref{3.1} ensures that $M$ is Cohen-Macaulay of dimension one. Consequently, the equality
$$
\ell_{G}(\rmH^{d-1}_{\fkM}(G))=\ell_A(\rmH_{\fkM A}^{d-1}(A/t^{-1}A)) = \sum_{P \in \Assh_AM} \ell_{A_P}(\rmH_{PA_P}^{d-1}(A_P))\cdot \rme_0(t^{-1}, A/P)
$$
holds. 
We claim that $\Assh_{A}M =\{\m' A\}$. To verify this, let $P \in \Assh_{A}M$. By Theorem \ref{3.1}, the localization $A_P$ is not Cohen-Macaulay. Let $\p = P \cap \calR'$. Then $\p \subseteq \fkM$. If $\p = \fkM$, then $P = \fkm A$, which contradicts the assumption that $P \in \Assh_AM$ and $\dim_AM=1$. Thus $\p \subsetneq \fkM$. Next, let $\p^*$ denote the ideal of $\calR'$ generated by the homogeneous elements of $\p$. It follows that $\calR'_{\p^*}$ is not Cohen-Macaulay, and consequently $A_{\p^* A}$ is not Cohen-Macaulay either. By Theorem \ref{3.1} again, we have $\p^*A \in \Supp_AM$. 
Since $M$ is Cohen-Macaulay, it holds that $\dim_{A_{\p^*A}}M_{\p^*A} \le 1$. 
If $\dim_{A_{\p^*A}}M_{\p^*A}=1$, then $\dim A/\p^* A=0$, which is a contradiction because $\p^*A \subseteq \p A \subsetneq \fkM A$. Thus, $\dim_{A_{\p^*A}}M_{\p^*A}=0$, and so $\dim A/\p^* A=1$. 
Consider the exact sequence
$$
0 \to \rmK_{A} \overset{t^{-1}}{\to} \rmK_{A} \to \rmK_{G_{\fkM}} \to M \overset{t^{-1}}{\to} M \to [\rmH^{d-1}_{\fkM A}(G_{\fkM})]^{\vee} \to 0.
$$
This implies that $\ell_A(M/t^{-1}M) < \infty$, so $t^{-1}$ acts as a non-zerodivisor on $M$. In particular,  $t^{-1} \not\in \p^*$. Thus $\p^* \in {}^*D(t^{-1})$. There exists $\fkq \in \Spec R$ such that $\fkq' = \p^*$. Therefore
$$
\p^*A = \fkq'A \subseteq \m' A \subsetneq \fkM A,
$$
and since $\dim A/\p^* A=1$, we deduce $\p^*A = \q' A = \m' A$. As $\m'A=\p^*A \subseteq \p A=P\subsetneq \fkM A$, it follows that $\m'A=\p A = P$, because $\dim A/\m' A = 1$. Consequently, $\Assh_{A}M =\{\m' A\}$. Finally, we obtain
$$
\ell_{G}(\rmH^{d-1}_{\fkM}(G)) = \sum_{P \in \Assh_AM}\ell_{A_P}(\rmH_{PA_P}^{d-1}(A_P))\cdot \rme_0(t^{-1}, A/P) 
 = \ell_{R}(\rmH_{\m}^{d-1}(R)).
$$
as desired. This completes the proof.
\end{proof}

A direct application of Theorem \ref{main} yields the following result.

\begin{cor}\label{4.4}
Let $(R, \m)$ be a Noetherian local ring with $\dim R = 3$ and $\calF=\{F_n\}_{n \in \Bbb Z}$ the Hilbert filtration of ideals in $R$.  Suppose that $R$ is a homomorphic image of a Gorenstein ring and $G(\calF)$ is quasi-Gorenstein. Then the following conditions are equivalent.
\begin{enumerate}
\item[$(1)$] $\calR'(\calF)$ is a quasi-Gorenstein graded ring. 
\item[$(2)$] The equality $\ell_R(\rmH^2_{\m}(R)) = \ell_{G(\calF)}(\rmH^2_{\fkM}(G(\calF)))$ holds.
\end{enumerate}
\end{cor}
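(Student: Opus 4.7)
The plan is to deduce this corollary as a direct application of Theorem \ref{main} with $d = 3$. Since the hypotheses that $R$ is a homomorphic image of a Gorenstein ring and that $G(\calF)$ is quasi-Gorenstein are given, the remaining work is to verify the two technical conditions in Theorem \ref{main}: $\depth \calR'(\calF) \ge 3$, and $\rmH^{2}_{\fkM}(G(\calF))$ is finitely generated as an $\calR'(\calF)$-module.

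For the depth inequality, I would exploit that $G(\calF)$ quasi-Gorenstein produces a graded isomorphism $\rmK_{G(\calF)} \cong G(\calF)(a)$ for some $a \in \bbZ$. Canonical modules always satisfy Serre's $(S_2)$ condition, so $G(\calF)$ is $(S_2)$; combined with $\dim G(\calF) = \dim R = 3$, this gives $\depth G(\calF) \ge 2$. Since $t^{-1} \in \fkM$ is a non-zerodivisor on $\calR'(\calF)$ whose quotient is $G(\calF)$, one concludes $\depth \calR'(\calF) = \depth G(\calF) + 1 \ge 3$.

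For the finite generation of $\rmH^{2}_{\fkM}(G(\calF))$, the strategy is to show that the local ring $G(\calF)_{\fkN}$ has FLC, which in turn reduces to verifying that $G(\calF)$ is locally Cohen-Macaulay on the punctured spectrum at $\fkN$. Since $G(\calF)_\fkN$ is itself quasi-Gorenstein, its canonical module has full support; this forces equidimensionality (every minimal prime satisfies $\dim G(\calF)_\fkN/\p = 3$). Because $G(\calF)$ is a homomorphic image of a Gorenstein ring, it is catenary, so every prime $P \subsetneq \fkN$ satisfies $\height P + \dim G(\calF)/P = 3$, whence $\height P \le 2$ (for non-graded $P$ one passes first to the ideal $P^{*}$ generated by the homogeneous components, as in Lemma \ref{4.2}). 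The $(S_2)$-property then forces $G(\calF)_{P}$ to be Cohen-Macaulay. Since $G(\calF)_\fkN$ is additionally a homomorphic image of a Cohen-Macaulay ring, it has FLC, so $\rmH^{2}_{\fkM}(G(\calF))$ has finite length.

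The main technical obstacle is the careful interplay between the $\bbZ$-graded ring and its localization at the graded maximal ideal, namely transferring quasi-Gorensteinness, $(S_2)$, and catenarity compatibly; but none of these transitions is serious. With both hypotheses of Theorem \ref{main} verified, its conclusion immediately provides the stated equivalence between quasi-Gorensteinness of $\calR'(\calF)$ and the length equality $\ell_R(\rmH^{2}_{\m}(R)) = \ell_{G(\calF)}(\rmH^{2}_{\fkM}(G(\calF)))$.
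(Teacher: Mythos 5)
Your proposal is correct and takes essentially the same route as the paper, which proves this corollary simply by declaring it ``a direct application of Theorem \ref{main}''; the only content is verifying the two hypotheses of that theorem that the corollary drops, and your verifications are sound. In particular, your argument that quasi-Gorensteinness gives $(S_2)$ and the support formula for $\rmK_{G(\calF)}$, hence $\depth \calR'(\calF) \ge 3$ and Cohen--Macaulayness of $G(\calF)$ on the punctured spectrum (so that $\rmH^{2}_{\fkM}(G(\calF))$ has finite length), is exactly the argument the paper itself uses in the $\dim R = 4$ corollary of Section 3.
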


Recall that a Noetherian graded $k$-algebra $R = \bigoplus_{n \ge 0} R_n$ over a field $R_0=k$ is called {\it homogeneous}, if $R =k[R_1]$, i.e., $R$ is generated by $R_1$ as a $k$-algebra.

\begin{cor}
Let $R=k[R_1]$ be a quasi-Gorenstein homogeneous ring over a field $k$ with $\dim R=3$. Then the extended Rees algebra $\calR'(\m R_\m)$ is quasi-Gorenstein, where $\m=R_+$ denotes the graded maximal ideal of $\calR'(\m)$. 
\end{cor}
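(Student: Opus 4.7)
The plan is to apply Corollary \ref{4.4} to the local ring $(R_\m, \m R_\m)$ equipped with the $\m R_\m$-adic Hilbert filtration $\calF = \{(\m R_\m)^n\}_{n \in \Bbb Z}$. Setting $d = \dim R_\m = 3$, it suffices to verify that $R_\m$ is a homomorphic image of a Gorenstein ring, that $G(\calF)$ is quasi-Gorenstein, and that the length equality
$$
\ell_{R_\m}(\rmH^{2}_{\m R_\m}(R_\m)) = \ell_{G(\calF)}(\rmH^{2}_{\fkM}(G(\calF)))
$$
of Corollary \ref{4.4}(2) holds, where $\fkM$ denotes the graded maximal ideal of $G(\calF)$.

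The first condition follows by writing $R = k[X_1, \dots, X_N]/I$ for some homogeneous ideal $I$: then $R_\m$ is a quotient of the Gorenstein local ring $k[X_1, \dots, X_N]_{(X_1, \dots, X_N)}$. For the second condition, I would invoke the standard identification available for a standard graded ring. Because $R = k[R_1]$, we have $\m^n = \bigoplus_{i \ge n} R_i$, so the composition $R_n \hookrightarrow \m^n \twoheadrightarrow \m^n/\m^{n+1}$ is a $k$-linear isomorphism, and its restriction respects multiplication. Since $\m^n/\m^{n+1}$ is already annihilated by $\m$, localization at $\m$ leaves it unchanged, and summing over $n$ yields a graded $k$-algebra isomorphism $R \overset{\sim}{\to} G(\calF)$. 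Consequently $G(\calF)$ is quasi-Gorenstein by hypothesis.

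For the length equality, the isomorphism above identifies $\fkM$ with $R_+ = \m$, giving $\rmH^{2}_{\fkM}(G(\calF)) \cong \rmH^{2}_{\m}(R)$ as $R$-modules. Since localization commutes with local cohomology and $\rmH^{2}_{\m}(R)$ is $\m$-torsion, one also has $\rmH^{2}_{\m R_\m}(R_\m) \cong \rmH^{2}_{\m}(R)$. Moreover, the quasi-Gorenstein hypothesis forces $R$ to satisfy Serre's $(S_2)$ condition, which in dimension three guarantees that $R$ is Cohen-Macaulay on the punctured spectrum; hence $\rmH^{2}_{\m}(R)$ has finite length. Both sides of the equality therefore coincide with $\ell_R(\rmH^{2}_{\m}(R))$, and Corollary \ref{4.4} now yields that $\calR'(\m R_\m)$ is quasi-Gorenstein. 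The only delicate step is the identification $G(\calF) \cong R$, which is a familiar computation for standard graded algebras; once it is in hand, the length equality follows automatically by chasing the natural isomorphisms, so no substantive obstacle is anticipated.
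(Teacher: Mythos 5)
Your proof is correct and takes essentially the same route as the paper: identify $G(\m R_\m)$ with $R$ via the standard graded isomorphism for a homogeneous algebra, and then apply Corollary \ref{4.4}. You are in fact slightly more explicit than the paper's own proof, which leaves the verification of the length equality in condition $(2)$ of Corollary \ref{4.4} implicit.
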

\begin{proof}
As $R$ is homogeneous, we have $R \cong G(\m)$. Thus, $G(\m R_\m) \cong R_\m \otimes_R G(\m)\cong G(\m)_\m$ is quasi-Gorenstein. The $\m R_\m$-adic filtration is Hilbert, and the local ring $R_\m$ is a homomorphic image of a Gorenstein ring. By Corollary \ref{4.4}, the ring $\calR'(\m R_\m)$ is quasi-Gorenstein. 
\end{proof}

\if0
\begin{cor}正しいか？
Let $R=k[R_1]$ be a quasi-Gorenstein homogeneous ring over a field $k$ with $\dim R=3$. Then the extended Rees algebra $\calR'(\m)$ of the graded maximal ideal $\m=R_+$ is quasi-Gorenstein. 
\end{cor}

\begin{proof}
As $R$ is homogeneous, we have $R \cong G(\m) \cong G(\m R_\m)$. In particular, $G(\m R_\m)$ is quasi-Gorenstein. The $\m R_\m$-adic filtration is Hilbert, and the local ring $R_\m$ is a homomorphic image of a Gorenstein ring. By Corollary \ref{4.4}, $\calR'(\m R_\m)$ is quasi-Gorenstein. Moreover, we obtain 
$$
R \cong G(\m) \cong G(\m R_\m) \cong R_\m \otimes_R G(\m) \cong R_\m \otimes_R R \cong R_\m
$$
which imply $\calR'(\m R_\m) \cong R_\m \otimes_R \calR'(\m) \cong R \otimes_R \calR'(\m) \cong \calR'(\m)$. Therefore,  $\calR'(\m)$ is quasi-Gorenstein. 
\end{proof}
\fi
 



\begin{ex}
Let $\Delta$ be a two-dimensional finite abstract simplicial complex whose geometric realization is homeomorphic to an orientable manifold that is not a sphere. Then the Stanley-Reisner ring $R=k[\Delta]$, defined over a field $k$ of characteristic $0$, is quasi-Gorenstein but not Gorenstein (\cite[Remark 4.6]{VY}). 
Denote by $\m=R_+$ the graded maximal ideal of $R$. 
Consequently,   $\calR'(\m R_\m)$ is a non-Gorenstein quasi-Gorenstein ring.
\end{ex}


\begin{ac}
The author would like to thank K.-i. Watanabe for his valuable comments.  
\end{ac}







\begin{thebibliography}{20}

\bibitem{A}
{\sc Y. Aoyama}, Some basic results on canonical modules, {\em J. Math. Kyoto Univ.}, {\bf 23} (1983), no. 1, 85--94.

\bibitem{AG}
{\sc Y. Aoyama and S. Goto}, Some special cases of a conjecture of Sharp, {\em J. Ma th. Kyoto Univ.}, {\bf 26} (1986), no.4, 613--634.  

\bibitem{GO}
{\sc S. Goto and T. Ogawa}, A note on rings with finite local cohomology, {\em Tokyo J. Math.}, {\bf 6} (1983), no.2, 403--411.



\bibitem{HKU}
{\sc W. Heinzer, M.-K. Kim, and B. Ulrich}, The Gorenstein and complete intersection properties of associated graded rings, {\em J. Pure Appl. Algebra}, {\bf  201} (2005), 264--283.

\bibitem{HKU2}
{\sc W. Heinzer, M.-K. Kim, and B. Ulrich}, The Cohen-Macaulay and Gorenstein properties of rings associated to filtrations, {\em Comm. Algebra}, {\bf  39} (2011), no.10, 3547--3580.

\bibitem{HIO}
{\sc M. Herrmann, S. Ikeda, and U. Orbanz}, Equimultiplicity and Blowing Up, {\em Springer-Verlag, Berlin}, 1988.
 

\bibitem{HK}
{\sc J. Herzog and E. Kunz}, Der kanonische Modul eines Cohen-Macaulay-Rings, Lecture Notes in Mathematics, 238, {\em Springer-Verlag, Berlin-New York}, 1971.

\bibitem{H}
{\sc H. Hironaka}, Certain numerical characters of singularities, {\em J. Math. Kyoto Univ.}, {\bf 10} (1970), 151--187.

\bibitem{HM}
{\sc S. Huckaba and T. Marley}, Hilbert coefficients and the depths of associated graded rings, {\em J. London Math. Soc.}, {\bf 56} (1997), 64--76.

\bibitem{K}
{\sc T. Kawasaki}, On arithmetic Macaulayfication of Noetherian rings, {\em Trans. Amer. Math. Soc.}, {\bf 354} (2002), no.1, 123--149.



\bibitem{Kim}
{\sc Y. Kim}, Quasi-Gorensteinness of extended Rees algebras, {\em Comm. Algebra}, {\bf  9} (2017), no.4, 3547--3580.


\bibitem{NO}
{\sc C. Nastasescu and F. van Oystaeyen}, Graded and filtered rings and modules, Lecture Notes in Mathematics, 758, {\em Springer-Verlag, Berlin-New York}, 1979.

\bibitem{PS}
{\sc E. Platte and U. Storch}, Invariante regul\"{a}re Differentialformen auf Gorenstein-Algebren, {\em Math. Z.}, {\bf 157} (1977), 1--11.

\bibitem{Rees}
{\sc D. Rees}, A note on analytically unramified local rings, {\em J. London Math. Soc.}, {\bf 36} (1961), 24--28.

\bibitem{Re}
{\sc I. Reiten}, The converse to a theorem of Sharp on Gorenstein modules, {\em Proc. Amer. Math. Soc.}, {\bf 32} (1972), 417--420.

\bibitem{STC}
{\sc P. Schenzel, N. V. Trung, and N. T. Cuong}, Verallgeminerte Cohen-Macaulay-
Moduln, {\em Math. Nachr.}, {\bf 85} (1978), 57--73. 

\bibitem{S2}
{\sc R. Y. Sharp}, On Gorenstein modules over a complete Cohen-Macaulay ring, {\em Quart. J. Math.}, {\bf 22}, no. 3 (1971), 425--434.

\bibitem{STT}
{\sc K. Shimomoto, N. Taniguchi, and E. Tavanfar}, A study of quasi-Gorenstein rings II: Deformation of quasi-Gorenstein property, {\em J. Algebra}, {\bf 562} (2020), 368--389.

\bibitem{SV}
{\sc J. St\"{u}ckrad and W. Vogel}, Buchsbaum rings and applications, An interaction
between algebra, geometry and topology, {\em Springer-Verlag, Berlin}, 1986.

\bibitem{T}
{\sc N. V. Trung}, Toward a theory of generalized Cohen-Macaulay modules, {\em Nagoya Math. J.}, {\bf 102} (1986), 1--49.

\bibitem{VV}
{\sc P. Valabrega and G. Valla}, Form rings and regular sequences, {\em Nagoya Math. J.}, {\bf 72} (1978), 93--101.

\bibitem{VY}
{\sc M. Varbaro and H. Yu}, Lefschetz duality for local cohomology, {\em J. Algebra}, {\bf 639} (2024), 498--515.

\end{thebibliography}
\end{document}